\title{Selective strong screenability and a game} 
\author{Liljana Babinkostova and Marion Scheepers} 
\date{\today} 
\newtheorem{theorem}{{\bf Theorem}}
\newtheorem{proposition}[theorem]{{\bf Proposition}}
\newtheorem{lemma}[theorem]{{\bf Lemma}}
\newtheorem{corollary}[theorem]{{\bf Corollary}}
\newtheorem{conjecture}{{\bf Conjecture}}
\newtheorem{problem}{{\bf Problem}}
\newcommand{\naturals}{{\mathbb N}}
\newcommand{\open}{\mathcal{O}}
\subjclass[2000]{Primary 54D20, 91A44}
\keywords{Selection principle, selective screenability, selective strong screenability, infinite game} 
\begin{document}
\maketitle
\begin{abstract}
Selective versions of screenability and of strong screenability coincide in a large class of spaces. We show that the corresponding games are not equivalent in even such standard metric spaces as the closed unit interval. We identify sufficient conditions for ONE to have a winning strategy (Theorem \ref{ONEmain}), and necessary conditions for TWO to have a winning strategy in the selective strong screenability game (Theorem \ref{twowinsomega}). 
\end{abstract}

Unless specified otherwise, all topological spaces in this paper are assumed to be infinite. A collection $\mathcal{A}$ of subsets of a topological space $(X,\tau)$ is \emph{discrete} if there is for each $x\in X$ a neighborhood $U$ of $x$ such that $\vert\{A\in\mathcal{A}:\, A\cap U\neq \emptyset\}\vert \le 1$. Note that a finite family of nonempty sets whose closures are disjoint is a discrete family. An infinite family of sets with pairwise disjoint closures need not be discrete, as illustrated by the family $\{\lbrack\frac{1}{2n+1},\;\frac{1}{2n}\rbrack: n\in{\mathbb N}\}$ of disjoint closed subsets of the real line.  A disjoint family of open sets covering a space is automatically a discrete family of open sets.

A family $\mathcal{A}$ of sets \emph{refines} a family $\mathcal{B}$ of sets if there is for each $A\in\mathcal{A}$ a $B\in\mathcal{B}$ such that $A\subseteq B$. The symbol $\mathcal{O}$ denotes the collection of all open covers of the space $(X,\tau)$. When $Y$ is a subset of $X$, then $\mathcal{O}_Y$ denotes the set of covers of $Y$ by sets open in $X$.

R.H. Bing introduced the notions of \emph{screenable} and \emph{strongly screenable} in \cite{Bing}. A topological space $(X,\tau)$ is \emph{strongly screenable} if there is for each open cover $\mathcal{U}$ of $X$ a sequence $(\mathcal{V}_n:n<\omega)$ such that each $\mathcal{V}_n$ is a \emph{discrete} collection of sets, each $\mathcal{V}_n$ refines $\mathcal{U}$, and $\bigcup\{\mathcal{V}_n:n<\omega\}$ is an open cover of $X$. We obtain the notion of being \emph{screenable} by replacing ``{\tt discrete}" in the definition of strong screenability with ``{\tt disjoint}". 

Towards defining the selective version of strong screenability let $\mathcal{A}$ and $\mathcal{B}$ be collections of families of subsets of a set $S$. Assume that the set $S$ is endowed with a topology. Then ${\sf S}_d(\mathcal{A},\mathcal{B})$ denotes the selection principle: 
\begin{quote} For each sequence $(\mathcal{U}_n:n<\omega)$ of elements of $\mathcal{A}$ there is a sequence $(\mathcal{V}_n:n<\omega)$ such that:
\begin{enumerate}
\item{For each $n$, $\mathcal{V}_n$ refines $\mathcal{U}_n$;} 
\item{For each $n$, $\mathcal{V}_n$ is a discrete collection of sets;} 
\item{$\bigcup\{\mathcal{V}_n:n<\omega\}$ is an element of $\mathcal{B}$.} 
\end{enumerate}
\end{quote}
In this notation the property ${\sf S}_d(\mathcal{O},\mathcal{O})$ of a topological space is called \emph{selective strong screenability} of the space. If in (2) of the definition of ${\sf S}_d(\mathcal{A},\mathcal{B})$ we replace {\tt discrete} with {\tt disjoint} we obtain the selection principle $\textsf{S}_c(\mathcal{A},\mathcal{B})$ that was introduced in \cite{Babinkostova}. The corresponding selection principle $\textsf{S}_c(\mathcal{O},\; \mathcal{O})$ for a topological space is the selective version of screenability, called \emph{selective screenability}. Selective screenability was introduced by Addis and Gresham in \cite{AG} under the name \emph{property} \textsf{C}.

Screenability properties are related to several fundamental topological notions, including paracompactness, metrizability and extensions of covering dimension. 
A family $\mathcal{A}$ of sets in a topological space $(X,\tau)$ has the  property of being \emph{locally finite} if there is for each $x\in X$ a neighborhood $U$ of $x$ such that $\vert\{A\in\mathcal{A}:\, A\cap U\neq \emptyset\}\vert$ is finite. A topological space is \emph{paracompact} if for each given open cover there is a locally finite open cover refining the given cover. In \cite{Michael} Michael and, independently, in \cite{Nagami} Nagami proved
\begin{theorem}[Michael, Nagami] A regular space is paracompact if, and only if, it is strongly screenable.
\end{theorem}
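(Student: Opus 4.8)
The plan is to prove the two implications separately. In the direction \emph{strongly screenable $\Rightarrow$ paracompact}, regularity is used just once, to convert a locally finite refinement into a locally finite \emph{open} one; in the direction \emph{regular and paracompact $\Rightarrow$ strongly screenable}, regularity is needed only to pass to normality, and the substance is the production of $\sigma$-discrete open refinements.

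For the first implication, let $\mathcal{U}$ be an open cover of the strongly screenable regular space $X$. Strong screenability supplies discrete families $\mathcal{V}_n$ ($n<\omega$) of open sets, each refining $\mathcal{U}$, whose union covers $X$. Put $G_n=\bigcup\mathcal{V}_n$ and replace each $V\in\mathcal{V}_n$ by $V\setminus\bigcup_{m<n}G_m$; the resulting family $\mathcal{W}$ still refines $\mathcal{U}$ and covers $X$, and it is locally finite, since for $x\in G_n$ with $n$ least the open set $G_n$ is a neighborhood of $x$ disjoint from every member of $\mathcal{W}$ coming from an index larger than $n$, while local finiteness of $\mathcal{V}_0,\dots,\mathcal{V}_n$ handles the finitely many remaining levels. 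Thus every open cover of $X$ has a locally finite, in general non-open, refinement. Regularity now enters: refine $\mathcal{U}$ to an open cover $\mathcal{U}'$ whose members' closures refine $\mathcal{U}$, take a locally finite refinement $\mathcal{A}$ of $\mathcal{U}'$; then $\mathcal{F}=\{\overline{A}:A\in\mathcal{A}\}$ is a locally finite \emph{closed} refinement of $\mathcal{U}$ (closure preserves local finiteness, and each $\overline{A}$ sits inside a member of $\mathcal{U}$). Finally, choose for each $x$ an open $O_x\ni x$ meeting finitely many members of $\mathcal{F}$, take a locally finite closed refinement $\mathcal{G}$ of $\{O_x:x\in X\}$, and set $F^{+}=X\setminus\bigcup\{G\in\mathcal{G}:G\cap F=\emptyset\}$ for $F\in\mathcal{F}$; each $F^{+}$ is open, being the complement of a locally finite union of closed sets, and contains $F$, and if $U_F\in\mathcal{U}$ is chosen with $F\subseteq U_F$ then $\{F^{+}\cap U_F:F\in\mathcal{F}\}$ is a locally finite open refinement of $\mathcal{U}$. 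Hence $X$ is paracompact.

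For the converse, let $X$ be regular and paracompact. Then $X$ is normal: given disjoint closed $C$ and $D$, cover $C$ by open sets with closures missing $D$, take a locally finite open refinement, and observe that the union of those of its members meeting $C$ has closure (a locally finite union of closures) disjoint from $D$. Now fix an open cover $\mathcal{U}$. Paracompactness gives a locally finite open refinement $\{V_s:s\in S\}$ of $\mathcal{U}$, normality gives a closed shrinking $\{F_s:s\in S\}$, and Urysohn's lemma together with local finiteness gives a partition of unity $\{p_s:s\in S\}$ with $p_s\equiv 1$ on $F_s$, $\sum_s p_s\equiv 1$, and $\{x:p_s(x)>0\}\subseteq V_s$. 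Form the pseudometric $\rho(x,y)=\sum_{s\in S}|p_s(x)-p_s(y)|$; near any point only finitely many $p_s$ are nonzero, so $\rho$ is locally a finite sum of continuous functions and its balls are $\tau$-open, and whenever $p_s(x)>0$ the $\rho$-ball of radius $p_s(x)$ about $x$ lies in $V_s$. Consequently $X$ has an open cover $\mathcal{B}$ by $\rho$-balls refining $\{V_s\}$, and it suffices to produce a $\sigma$-discrete open refinement of $\mathcal{B}$. For this I would run, in the pseudometric space $(X,\rho)$, the transfinite recursion used in Stone's proof that metrizable spaces are paracompact: with $\mathcal{B}$ well ordered, one builds for each $n$ a family $\mathcal{W}_n$ whose members are unions of $\rho$-balls of radius $2^{-n}$ about suitably chosen centers, arranged so that $\mathcal{W}_n$ refines $\mathcal{B}$, $\bigcup_n\bigcup\mathcal{W}_n=X$, and $\mathcal{W}_n$ is discrete because a $\rho$-ball of radius $2^{-n-1}$ meets at most one member of it. Since $\mathcal{B}$ refines $\mathcal{U}$ and $\rho$-open sets are $\tau$-open, the sequence $(\mathcal{W}_n:n<\omega)$ witnesses strong screenability of $X$.

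The step I expect to be the main obstacle is this last construction of a $\sigma$-\emph{discrete}, as opposed to merely $\sigma$-locally finite, open refinement: a single appeal to paracompactness yields only locally finite refinements, and upgrading to a countable sequence of discrete families genuinely requires the metric-like gadget $\rho$ together with the transfinite recursion, in the course of which one must simultaneously secure openness of the members of $\mathcal{W}_n$, their refining $\mathcal{B}$ (hence $\mathcal{U}$), their union covering $X$, and --- the delicate point --- the discreteness of each $\mathcal{W}_n$. The remaining ingredients (the disjointification, the passage to closures, normality of regular paracompact spaces, and the existence of the partition of unity) are routine.
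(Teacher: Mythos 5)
The paper does not prove this statement at all: it is quoted as a classical theorem of Michael and Nagami, with the proof residing in the cited references, so there is no internal argument to compare yours against. Judged on its own, your proposal follows the standard classical route and is essentially correct. The direction from strong screenability to paracompactness (disjointify the $\sigma$-discrete open refinement to get a locally finite refinement, then use regularity to pass from locally finite refinements to locally finite closed refinements to locally finite open refinements via the $F^{+}$ expansion) is exactly Michael's argument, and your verification of local finiteness of $\mathcal{W}$ and of $\{F^{+}\}$ is sound. In the converse direction the pseudometric-plus-Stone strategy is legitimate, and you correctly identify that Stone's recursion is the real content; it does go through for pseudometrics since separation of points is never used, and $\rho$-open and $\rho$-discrete imply $\tau$-open and $\tau$-discrete because the $\rho$-topology is coarser. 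One small slip: you cannot have both $p_s\equiv 1$ on $F_s$ and $\sum_s p_s\equiv 1$ when the closed shrinking is not disjoint; the standard fix is to take Urysohn functions $q_s$ with $q_s\equiv 1$ on $F_s$ and $q_s\equiv 0$ off $V_s$ and normalize $p_s=q_s/\sum_t q_t$, which preserves every property you actually use ($\sum_s p_s\equiv 1$, $\{x:p_s(x)>0\}\subseteq V_s$, and each point having some $s$ with $p_s(x)>0$). With that adjustment the argument is complete modulo the standard, correctly invoked, Stone construction.
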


Theorem 5 of \cite{Nagami} also proves\footnote{In personal communication Roman Pol and Elzbieta Pol pointed out that Nagami's result can be strengthened to show  that selective screenability and selective strong screenability coincide in normal countably paracompact spaces, and thus in metric spaces.}: 
\begin{theorem}[Nagami]\label{nagami2} A normal, countably paracompact space is screenable if, and only if, it is strongly screenable.
\end{theorem}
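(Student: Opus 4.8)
The plan is to prove the substantial implication — that a screenable normal countably paracompact space is strongly screenable — since the converse needs no hypotheses at all: as noted in the opening discussion, a discrete family of sets is automatically pairwise disjoint (a point lying in two members of the family is witnessed against discreteness by \emph{every} neighbourhood of that point), so every strongly screenable space is screenable outright.

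So let $(X,\tau)$ be normal, countably paracompact, and screenable, and let $\mathcal{U}$ be an open cover of $X$. First I would invoke screenability of $X$ on $\mathcal{U}$ to obtain a sequence $(\mathcal{V}_n : n<\omega)$ of pairwise disjoint open families, each refining $\mathcal{U}$, with $\bigcup_{n<\omega}\mathcal{V}_n$ an open cover of $X$. Set $W_n=\bigcup\{V : V\in\mathcal{V}_n\}$, so that $\{W_n : n<\omega\}$ is a countable open cover of $X$. Here the single external ingredient enters: by the classical theory of normal countably paracompact spaces (Dowker), every countable open cover of such a space admits a precise open shrinking whose members have closures contained in the corresponding original sets; applied to $\{W_n : n<\omega\}$ this yields an open cover $\{G_n : n<\omega\}$ of $X$ with $\overline{G_n}\subseteq W_n$ for every $n$. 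I would then define $\mathcal{V}_n'=\{V\cap G_n : V\in\mathcal{V}_n\}$ and claim that $(\mathcal{V}_n' : n<\omega)$ witnesses strong screenability for $\mathcal{U}$.

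Three short verifications remain. Every member $V\cap G_n$ is open and lies inside whichever element of $\mathcal{U}$ contains $V$, so $\mathcal{V}_n'$ refines $\mathcal{U}$. The union $\bigcup_{n<\omega}\mathcal{V}_n'$ is a cover: given $x\in X$, pick $n$ with $x\in G_n\subseteq W_n$; then $x\in V$ for some $V\in\mathcal{V}_n$, so $x\in V\cap G_n$. And each $\mathcal{V}_n'$ is discrete — this is the purpose of intersecting with $G_n$: if $x\notin\overline{G_n}$, then $X\setminus\overline{G_n}$ is a neighbourhood of $x$ meeting no member of $\mathcal{V}_n'$ (all of which lie in $G_n$); if $x\in\overline{G_n}\subseteq W_n$, then $x$ lies in a unique member $V_0$ of $\mathcal{V}_n$ (unique because $\mathcal{V}_n$ is disjoint), and the neighbourhood $V_0$ of $x$ meets $V\cap G_n$ only when $V=V_0$.

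The step I expect to be the main obstacle is not any of these routine checks but the precise form of the shrinking lemma: one must be sure that "normal and countably paracompact'' delivers a \emph{precise} closure-shrinking of a \emph{countable} cover (countable paracompactness is genuinely needed here — a Dowker space is exactly a normal space where this fails). If one prefers not to quote it, it can be derived from Ishikawa's criterion for countable paracompactness together with the finite shrinking lemma for normal spaces, though that derivation is itself a standard exercise. Finally, I would remark that running the same argument with the hypothesis ${\sf S}_c(\mathcal{O},\mathcal{O})$ in place of screenability, and a sequence of open covers in place of the single cover $\mathcal{U}$, yields ${\sf S}_d(\mathcal{O},\mathcal{O})$ word for word — the sharpening recorded in the footnote.
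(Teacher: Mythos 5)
Your argument is correct. Note that the paper offers no proof of this statement at all --- it is quoted as Theorem 5 of Nagami's paper --- so there is no internal argument to compare against; what you have written is the standard proof. The easy direction is right for the reason you give: a point in two members of a family defeats discreteness at that point, so discrete families are disjoint and strong screenability implies screenability with no hypotheses. For the substantial direction, the one external ingredient you lean on --- that in a normal, countably paracompact space every countable open cover $\{W_n\}$ admits an open shrinking $\{G_n\}$ with $\overline{G_n}\subseteq W_n$ still covering --- is exactly Dowker's characterization, and you correctly identify it as the place where countable paracompactness is indispensable. The three verifications are all sound; in particular the discreteness check is complete, since a point outside $\overline{G_n}$ is separated from all of $\mathcal{V}_n'$ by $X\setminus\overline{G_n}$, while a point of $\overline{G_n}\subseteq W_n$ lies in a unique $V_0\in\mathcal{V}_n$, which meets only $V_0\cap G_n$. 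Your closing remark is also accurate: the same shrinking applied to a sequence of covers converts an $\textsf{S}_c(\mathcal{O},\mathcal{O})$ witness into an $\textsf{S}_d(\mathcal{O},\mathcal{O})$ witness, which is precisely the strengthening the paper attributes to R.~Pol and E.~Pol in the footnote to this theorem.
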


The hypothesis of countable paracompactness in Theorem \ref{nagami2} is necessary. To justify this we first comment on the terminology \emph{zero dimensional}: According to Sierpinski \cite{Engelking} a space is \emph{zero-dimensional} if each element has a neighborhood basis consisting of sets that are both open and closed. A space has \emph{covering dimension zero} if each \emph{finite} open cover has a refinement by disjoint open sets, still covering the space. A space is \emph{ultraparacompact} if \emph{each} open cover has a refinement by disjoint open sets still covering the space. Covering dimension zero is also called \emph{strongly zero dimensional}.  

\begin{theorem}[Balogh, \cite{Balogh}]\label{balogh} There is a strongly zerodimensional $\textsf{T}_4$ space that is screenable\footnote{Balogh's space is in fact \emph{selectively} screenable.} but not countably paracompact, and thus not strongly screenable. 
\end{theorem}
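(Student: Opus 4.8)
The plan is to reduce the theorem to a single construction and then sketch the four verifications. First the reduction. A $\textsf{T}_4$ space is regular, so by the Michael--Nagami theorem a strongly screenable $\textsf{T}_4$ space is paracompact, hence countably paracompact; contrapositively, a $\textsf{T}_4$ space that is not countably paracompact cannot be strongly screenable. Thus the clause ``and thus not strongly screenable'' in the statement is automatic, and it suffices to construct a strongly zero-dimensional $\textsf{T}_4$ space $X$ that is screenable but not countably paracompact --- equivalently, a screenable, strongly zero-dimensional \emph{Dowker space} (a normal space failing countable paracompactness). Theorem \ref{nagami2} could also be invoked here, but only the Michael--Nagami direction is needed for the ``thus''.

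For the construction I would follow the ``small Dowker space'' paradigm: fix a large regular cardinal $\theta$ and take the underlying set of $X$ to be a suitably chosen family $\mathcal{M}$ of countable elementary submodels of $H(\theta)$ (or, more combinatorially, build $X$ over an ordinal equipped with a ladder/matrix system), topologized by a basis of \emph{clopen} sets given by finite approximations of the relevant parameters. Two features are designed into the construction. (i) There is a built-in ``rank'' function yielding a decomposition $X = \bigcup_{n<\omega} X_n$ with $X_n \subseteq X_{n+1}$ and each $X_n$ closed; fattening the $X_n$ produces an increasing open cover of $X$, and the combinatorics of $\mathcal{M}$ is arranged, by a club-guessing-type argument, so that this cover has no closed shrinking --- equivalently, so that a certain decreasing sequence of closed sets with empty intersection cannot be separated by open sets with empty intersection of closures. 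This is the Dowker part. (ii) Each level $X_n$, in its subspace topology, is tame: strongly zero-dimensional and paracompact in its own right (e.g.\ of small weight, or metrizable). This is what powers screenability.

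Granting (i) and (ii), the remaining verifications go as follows. Normality and strong zero-dimensionality are checked together from the clopen basis: to separate two disjoint closed sets, or to disjointly refine a finite open cover, one works level by level, using tameness of $X_n$ to obtain clopen separators inside each level and then amalgamating the finitely many pieces using that distinct levels are topologically far apart, the clopen basis guaranteeing the separators are clopen in $X$. Screenability uses the same decomposition: given an arbitrary open cover $\mathcal{U}$ of $X$, for each $n$ the cover $\{U \cap X_n : U \in \mathcal{U}\}$ of the tame subspace $X_n$ has a disjoint clopen refinement, which can be expanded to a disjoint family $\mathcal{V}_n$ of clopen subsets of $X$ still refining $\mathcal{U}$ and still covering $X_n$; then $\bigcup_{n<\omega}\mathcal{V}_n$ is a $\sigma$-disjoint open refinement of $\mathcal{U}$ covering $X$, so $X$ is screenable. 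For the sharper claim in the footnote --- that $X$ is selectively screenable --- one re-runs the level argument against the $n$-th given cover $\mathcal{U}_n$ (choosing $\mathcal{V}_n$ to refine $\mathcal{U}_n$ and cover $X_n$), which is legitimate precisely because the levels are cofinal and individually admit such refinements.

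The main obstacle is the tension between (i) and (ii): the combinatorial features that defeat a closed shrinking of the canonical increasing cover are exactly the ones that tend to obstruct $\sigma$-disjoint refinements, which is why screenable Dowker spaces were hard to produce at all. The delicate work is choosing $\mathcal{M}$ (or the ladder system) so that the levels $X_n$ are individually tame enough to disjointly refine every open cover, yet globally so entangled that countable paracompactness fails --- and doing this in ZFC. Reproducing that bookkeeping faithfully is the substance of Balogh's argument; the sketch above only organizes where each hypothesis of the theorem is used.
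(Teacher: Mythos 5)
Your handling of the deductive clause is correct and matches the only reasoning this paper itself supplies for this statement: since a $\textsf{T}_4$ space is regular, the Michael--Nagami theorem gives that strong screenability would imply paracompactness and hence countable paracompactness, so the ``thus not strongly screenable'' follows from the failure of countable paracompactness. Note, however, that the paper offers no proof of the existence claim at all --- Theorem \ref{balogh} is an external citation to \cite{Balogh} --- so the substance of the statement is precisely the construction, and that is the part your proposal does not deliver. What you have written is a list of desiderata (a level decomposition $X=\bigcup_n X_n$ with tame levels, a club-guessing obstruction to shrinking the induced increasing open cover) together with verifications that are conditional on ``granting (i) and (ii).'' You acknowledge this yourself in the final paragraph: the delicate choice of the elementary-submodel family, which is where the entire difficulty of reconciling (i) with (ii) in ZFC lives, is deferred. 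A proof that defers exactly the step whose possibility is the theorem's content is a gap, not a proof.

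Two smaller points. First, your sketch has the levels $X_n$ closed and then ``fattened'' to an open cover; the structure this paper actually uses later (in Section 4) is that Balogh's space is a union of countably many \emph{open} ultraparacompact subspaces, which is what directly yields the $\sigma$-disjoint clopen refinements for (selective) screenability without any expansion step. Your version would additionally need the expanded disjoint families to remain disjoint after leaving the closed level, which is not automatic. Second, your screenability argument quietly assumes each level is not merely paracompact but \emph{ultra}paracompact (admits disjoint clopen refinements of arbitrary open covers); for the amalgamation and for strong zero-dimensionality of $X$ this is the property you actually need, so it should be stated as the design requirement rather than ``paracompact, e.g.\ of small weight or metrizable.''
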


In \cite{BKS} it was shown that for regular spaces paracompactness is equivalent to a selective version of paracompactness. Although in these spaces paracompactness is equivalent to strong screenability, (selective) paracompactness does not imply selective screenability: The Hilbert Cube $\lbrack 0,\; 1\rbrack^{{\mathbb N}}$ is compact and metrizable, but is not selectively screenable.

In separable metric spaces selective screenability is related to dimension theory:  If we use $\mathcal{O}_2$ to denote the family of open covers consisting of two sets each, then ${\sf S}_c(\mathcal{O}_2,\mathcal{O})$ corresponds to Alexandroff's notion of \emph{weakly infinite dimensional}. It was an open problem whether Hurewicz's notion of countable dimensionality coincides with Alexandroff's notion of weak infinite dimensionality until R. Pol gave an example of a compact selectively screenable metrizable space that is not countable dimensional \cite{RPol1981}.

In separable metrizable spaces dimension theoretic concepts have been further clarified 
by the study of the \emph{selective screenability game}: Let an ordinal $\alpha>0$ be given. Then ${\sf G}^{\alpha}_c(\mathcal{A},\mathcal{B})$ denotes the following game of length $\alpha$: 
In inning $\gamma<\alpha$ player ONE selects an element $A_{\gamma}$ of $\mathcal{A}$, and TWO then responds with $B_{\gamma}$, a disjoint collection of sets that is a refinement of $A_{\gamma}$. A play $A_0,\, B_0,\, \cdots,\, A_{\gamma},\, B_{\gamma},\,\cdots \hspace{0.1in}\gamma<\alpha$
is won by TWO if $\bigcup\{B_{\gamma}:\gamma<\alpha\}\in\mathcal{B}$; otherwise, ONE wins.
It was proven in \cite{LB2} that a separable metrizable space $X$ is
\begin{enumerate}
\item{of Lebesgue covering dimension $n$ if, and only if, $n$ is minimal such that TWO has a winning strategy in ${\sf G}_c^{n+1}(\mathcal{O},\mathcal{O})$;}
\item{countable dimensional (in the sense of Hurewicz) if, and only if, TWO has a winning strategy in ${\sf G}^{\omega}_c(\mathcal{O},\mathcal{O})$.}
\end{enumerate}
These results inspired the notion of \emph{game dimension}, explored in the papers \cite{LBGD1} and \cite{LBGD2}. 
Even though selective screenability and selective strong screenability are equivalent concepts in normal countably paracompact spaces, the corresponding games have very different characteristics, the topic of this paper. In sections 3 and 4 we report findings regarding player ONE and player TWO, respectively, on the length $\omega$ version of the selective strong screenability game. In section 5 we consider other ordinal lengths for the game. 

\section{The selective strong screenability game}

For ordinal $\alpha>0$ define the game ${\sf G}^{\alpha}_d(\mathcal{A},\mathcal{B})$ as follows: In each inning $\gamma<\alpha$ ONE first selects an $A_{\gamma}$ from $\mathcal{A}$, to which TWO responds with a $B_{\gamma}$ which is a discrete family of sets refining the family $A_{\gamma}$.  A play
\[
  A_0,\, B_0,\, \cdots,\, A_{\gamma},\, B_{\gamma},\,\cdots \hspace{0.1in}\gamma<\alpha
\]
is won by TWO if $\bigcup\{B_{\gamma}:\gamma<\alpha\}\in\mathcal{B}$; otherwise, ONE wins.

Aside from the following easily verified relationships  the games ${\sf G}^{\alpha}_d(\mathcal{A},\mathcal{B})$ and ${\sf G}^{\alpha}_c(\mathcal{A},\mathcal{B})$ are in fact very different from each other:
\begin{itemize}
\item{If TWO has a winning strategy in ${\sf G}^{\alpha}_d(\mathcal{A},\mathcal{B})$, then TWO has a winning strategy in ${\sf G}^{\alpha}_c(\mathcal{A},\mathcal{B})$.}
\item{If ONE has a winning strategy in ${\sf G}^{\alpha}_c(\mathcal{A},\mathcal{B})$, then ONE has a winning strategy in ${\sf G}^{\alpha}_d(\mathcal{A},\mathcal{B})$.}
\end{itemize}

Moreover, certain monotonicity properties hold for this game:
\begin{itemize}
\item{Assume that $\mathcal{A}^{\prime}\supseteq \mathcal{A}$ and $\mathcal{B}^{\prime}\subseteq \mathcal{B}$: If ONE has a winning strategy in the game $\textsf{G}_d^{\alpha}(\mathcal{A},\; \mathcal{B})$ then ONE has a winning strategy in the game $\textsf{G}^{\alpha}_d(\mathcal{A}^{\prime},\mathcal{B}^{\prime})$. If TWO has a winning strategy in the game$\textsf{G}_d^{\alpha}(\mathcal{A}^{\prime},\; \mathcal{B}^{\prime})$ then TWO has a winning strategy in the game $\textsf{G}^{\alpha}_d(\mathcal{A},\mathcal{B})$.}
\item{Let $\alpha<\beta$ be ordinal numbers. If ONE has a winning strategy in the game $\textsf{G}_d^{\beta}(\mathcal{A},\; \mathcal{B})$ then ONE has a winning strategy in the game $\textsf{G}^{\alpha}_d(\mathcal{A},\mathcal{B})$. If TWO has a winning strategy in the game $\textsf{G}_d^{\alpha}(\mathcal{A},\; \mathcal{B})$ then TWO has a winning strategy in the game $\textsf{G}^{\beta}_d(\mathcal{A},\mathcal{B})$.}
\end{itemize}

Also the following fact is easy to verify:
\begin{proposition} Let $(X,\tau)$ be a topological space, let $Y$ be a closed subset of $X$ and let $\alpha>0$ be an ordinal. If ONE has a winning strategy in the game $\textsf{G}^{\alpha}_d(\mathcal{O},\mathcal{O})$ played on $Y$, then ONE has a winning strategy in this game played on $X$. If TWO has a winning strategy in the game $\textsf{G}^{\alpha}_d(\mathcal{O},\mathcal{O})$ played on $X$, then TWO has a winning strategy in this game played on $Y$.
\end{proposition}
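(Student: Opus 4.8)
The plan is to transfer strategies by ``padding'' open covers of $Y$ with the set $X\setminus Y$ — which is open precisely because $Y$ is closed — and then restricting TWO's discrete responses back to $Y$. Two elementary observations underlie everything. First, if $\mathcal{D}$ is a discrete family of subsets of $X$, then $\{D\cap Y:D\in\mathcal{D}\}$ is discrete in $Y$: a neighborhood of $y\in Y$ in $X$ meeting at most one member of $\mathcal{D}$ restricts to a neighborhood of $y$ in $Y$ meeting at most one member of the restricted family. Second, given an open cover $\mathcal{U}$ of $Y$, fix for each $U\in\mathcal{U}$ an open $\widehat U\subseteq X$ with $\widehat U\cap Y=U$ and set $\widehat{\mathcal{U}}=\{\widehat U:U\in\mathcal{U}\}\cup\{X\setminus Y\}$, an open cover of $X$; if $\mathcal{W}$ refines $\widehat{\mathcal{U}}$, then discarding the members of $\mathcal{W}$ contained in $X\setminus Y$ and intersecting the rest with $Y$ produces a family refining $\mathcal{U}$, since a member contained in some $\widehat U$ meets $Y$ only inside $U$ while a member contained in $X\setminus Y$ is disjoint from $Y$.

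For the first assertion, suppose $\sigma$ is a winning strategy for ONE in the game on $Y$. I would define ONE's strategy $\sigma'$ on $X$ together with, by recursion on $\gamma<\alpha$, an auxiliary play of the game on $Y$ in which ONE follows $\sigma$: in inning $\gamma$, if $\sigma$ applied to the auxiliary play so far prescribes the cover $\mathcal{U}_\gamma$ of $Y$, then $\sigma'$ prescribes $\widehat{\mathcal{U}_\gamma}$ on $X$; when TWO answers on $X$ with a discrete $\mathcal{W}_\gamma$ refining $\widehat{\mathcal{U}_\gamma}$, append to the auxiliary play the move $\mathcal{B}_\gamma=\{W\cap Y:W\in\mathcal{W}_\gamma\}\setminus\{\emptyset\}$. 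By the two observations, $\mathcal{B}_\gamma$ is a legal response to $\mathcal{U}_\gamma$, and at limit innings the auxiliary play is already determined, so nothing extra is needed there. Since $\sigma$ wins, the auxiliary play is a loss for TWO, so some $y\in Y$ belongs to no member of any $\mathcal{B}_\gamma$; and as $\bigcup\big(\bigcup_{\gamma}\mathcal{B}_\gamma\big)=\big(\bigcup_{\gamma}\bigcup\mathcal{W}_\gamma\big)\cap Y$ and $y\in Y$, this $y$ lies in no member of any $\mathcal{W}_\gamma$, so $\bigcup_\gamma\mathcal{W}_\gamma$ fails to cover $X$ and $\sigma'$ wins.

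For the second assertion, suppose $\tau$ is a winning strategy for TWO in the game on $X$. I would have TWO on $Y$ maintain an auxiliary play on $X$: when ONE plays a cover $\mathcal{U}_\gamma$ of $Y$, TWO feeds $\widehat{\mathcal{U}_\gamma}$ to $\tau$ in the auxiliary play, receives a discrete refinement $\mathcal{W}_\gamma$, and actually plays $\mathcal{B}_\gamma=\{W\cap Y:W\in\mathcal{W}_\gamma\}\setminus\{\emptyset\}$, which by the two observations is a legal discrete refinement of $\mathcal{U}_\gamma$. Since $\tau$ wins, $\bigcup_\gamma\mathcal{W}_\gamma$ covers $X$, hence $\bigcup\big(\bigcup_\gamma\mathcal{B}_\gamma\big)=\big(\bigcup_\gamma\bigcup\mathcal{W}_\gamma\big)\cap Y=Y$, so TWO's play wins on $Y$.

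The argument is essentially bookkeeping; the only point deserving care is that each restricted family $\mathcal{B}_\gamma$ is simultaneously discrete \emph{in $Y$} and a refinement of $\mathcal{U}_\gamma$, both of which follow immediately from $X\setminus Y$ being open. One should also note that whichever reading one gives to ``the game played on $Y$'' — the subspace topology on $Y$, or covers of $Y$ by sets open in $X$ in the spirit of $\mathcal{O}_Y$ — the same padding-and-restricting scheme applies with only cosmetic changes, and that no separation axioms on $X$ are used anywhere.
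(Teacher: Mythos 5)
Your argument is correct, and it supplies in full the standard ``pad with $X\setminus Y$, then restrict back to $Y$'' bookkeeping that the paper itself omits: the proposition is stated there only as ``easy to verify,'' with no proof given. Both directions, the legality of the restricted moves (discreteness in $Y$ and refinement of $\mathcal{U}_\gamma$), and the transfer of the winning condition are handled correctly, so there is nothing to compare against and nothing to fix.
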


\section{Winning strategies for player ONE}

The following version of the Banach-Mazur game on a topological space $(X,\tau)$ with specified subspace $Y$  was defined in \cite{Oxtoby}: 
There is an inning per finite ordinal. In the $n$-th inning ONE chooses a nonempty open subset $O_n$ of $X$ and TWO responds with a nonempty open subset $T_n$ of $X$. The players must obey the rule that for each $n$, $O_n\supseteq T_n\supseteq O_{n+1}$. ONE wins a play 
\[
  O_0,\; T_0,\; O_1,\; T_1,\; \ldots O_n,\; T_n,\; \ldots
\]
if $Y\cap(\bigcap\{O_n:n<\omega\}) \neq \emptyset$. Otherwise, TWO wins the play. 

In \cite{GT}, p. 53, the special case of $Y=X$ of this game is denoted $\textsf{MB}(X)$. We use the notation $\textsf{MB}(Y,X)$ to denote this game in the general case.

\begin{lemma}\label{coverconstruct} If $X$ is a $\textsf{T}_1$-space and $U\neq X$ is an open subset of $X$ such that $\vert U\vert > 1$, then there is an open cover $\mathcal{U}$ of $X$ such that for each $V\in\mathcal{U}$ we have $U\not\subseteq V$. \end{lemma}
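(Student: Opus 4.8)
The plan is to produce the required cover by exploiting the $\textsf{T}_1$ separation to "pry apart" the points of $U$. Since $\vert U\vert > 1$, pick two distinct points $a, b \in U$. The $\textsf{T}_1$ property gives an open set $W_a$ with $a \in W_a$ and $b \notin W_a$, and an open set $W_b$ with $b \in W_b$ and $a \notin W_b$. The candidate cover will be built from sets of the form $W \cap U$ together with something covering $X \setminus U$; the point is that no single member of the cover can contain both $a$ and $b$, hence none can contain all of $U$.

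More precisely, first I would observe that for each $x \in U$ there is an open set $G_x \subseteq U$ with $x \in G_x$ and $G_x \neq U$: indeed if $x \neq a$ take $G_x = W_b \cap U$ (which omits $a$), and if $x = a$ take $G_x = W_a \cap U$ (which omits $b$); in either case $G_x$ is a proper open subset of $U$ since it misses a point of $U$. Next, since $U \neq X$, the set $X \setminus U$ is nonempty; for each $y \in X \setminus U$ simply take $G_y = X$ — wait, that would ruin the conclusion, so instead I note we only need to \emph{cover} $X\setminus U$ by open sets each failing to contain $U$. The cleanest route: for $y \in X\setminus U$, use that $\textsf{T}_1$ gives an open $H_y \ni y$ with $a \notin H_y$ (if $a$ happens to equal... but $a \in U$ and $y \notin U$ so $a \neq y$, fine), so $H_y$ cannot contain $U$ either since it misses $a \in U$. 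Then set
\[
  \mathcal{U} = \{G_x : x \in U\} \cup \{H_y : y \in X\setminus U\}.
\]
Each element of $\mathcal{U}$ omits at least one of $a$ or $b$ (the $G_x$'s by construction, the $H_y$'s because they omit $a$), hence no $V \in \mathcal{U}$ satisfies $U \subseteq V$. And $\mathcal{U}$ covers $X$: every point of $U$ lies in its own $G_x$ and every point of $X \setminus U$ lies in its own $H_y$.

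Finally I would double-check the edge cases: $U$ open and nonempty is needed so that the $G_x$ are open and nonempty; $\vert U \vert > 1$ is exactly what lets us find $a \neq b$ and force each $G_x$ to be proper; and $U \neq X$ is what makes $X \setminus U$ nonempty (this part of the cover is not strictly needed for the non-containment conclusion, but it is needed to actually cover $X$). The one genuinely delicate point — and the main obstacle, such as it is — is making sure that \emph{every} member of the cover fails to contain $U$, including those chosen to cover $X \setminus U$; this is handled uniformly by always arranging to omit the fixed point $a \in U$. I expect the whole argument to be short, with $\textsf{T}_1$ doing all the work through the two separating open sets.
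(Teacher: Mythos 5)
Your overall strategy is the same as the paper's: fix two distinct points $a,b\in U$ and arrange that every member of the cover omits at least one of them, so that no member can contain $U$. However, one step fails as written. For $x\in U$ with $x\neq a$ you assign the set $G_x = W_b\cap U$, where $W_b$ is the \emph{fixed} open set chosen at the outset to contain $b$ and omit $a$. Nothing forces such an $x$ to belong to $W_b$: for instance in $X={\mathbb R}$ with $U=(0,1)$, $a=\tfrac14$, $b=\tfrac34$ and $W_b=(\tfrac12,1)$, the point $x=\tfrac1{10}$ does not lie in $G_x=W_b\cap U$. Consequently your closing claim that ``every point of $U$ lies in its own $G_x$'' is false, and the family $\mathcal{U}$ you build need not cover $X$. (The non-containment half of the argument, and your treatment of $X\setminus U$ via the sets $H_y$ omitting $a$, are both fine.)

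The repair is immediate and stays entirely within your framework: since $X$ is $\textsf{T}_1$ the singleton $\{a\}$ is closed, so for each $x\in U\setminus\{a\}$ you may take $G_x=U\setminus\{a\}$ (or, per point, an open $V_x\ni x$ with $a\notin V_x$, intersected with $U$), while for $x=a$ you keep $G_a=W_a\cap U$, which omits $b$. With that change your proof is correct and essentially the paper's argument: the paper chooses $U_x,U_y\subseteq U$ separating the two distinguished points and, for every other point $z$ of $X$ (inside or outside $U$), an open $U_z\subseteq X\setminus\{x,y\}$, so that each cover element misses at least one of the two distinguished points of $U$.
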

\begin{proof} With $U$ and $X$ as given, choose distinct elements $x$ and $y$ in $U$. Then as $X$ is $\textsf{T}_1$ choose open sets $U_x$ and $U_y$, both subsets of $U$, with $x\in U_x\setminus U_y$ and $y\in U_y\setminus U_x$. For any $z\in X\setminus\{x,\; y\}$ choose an open set $U_z\subseteq X\setminus\{x,\; y\}$. Then the open cover $\mathcal{U} = \{U_t:\; t\in X\}$ is as required.
\end{proof}

\begin{lemma}\label{discreteconnected}
A space is connected if, and only if, it is not a union of a discrete collection consisting of more than one nonempty proper subsets. 
\end{lemma}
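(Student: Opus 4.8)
The plan is to prove both implications by contraposition, noting that each direction reduces to the standard equivalence: $X$ is disconnected if, and only if, $X$ has a subset that is clopen, nonempty, and distinct from $X$.

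First I would treat the easy direction, namely that a disconnected space \emph{is} a union of a discrete collection of more than one nonempty proper subsets. If $X$ is disconnected, fix a clopen $U$ with $\emptyset\neq U\neq X$. Then $\{U,\, X\setminus U\}$ is a collection consisting of exactly two subsets of $X$, each nonempty and each a proper subset; its union is $X$; and it is a pairwise disjoint family of open sets, hence (as noted above) automatically a discrete family.

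For the converse, suppose $X=\bigcup\mathcal{A}$, where $\mathcal{A}$ is a discrete collection with $\vert\mathcal{A}\vert>1$ and each member of $\mathcal{A}$ is nonempty and a proper subset of $X$. Fix $A_0\in\mathcal{A}$; I claim $A_0$ is clopen, which finishes the proof, since then $A_0$ witnesses that $X$ is disconnected. To see that $A_0$ is open, let $x\in A_0$ and use discreteness to pick a neighborhood $U$ of $x$ meeting at most one member of $\mathcal{A}$; since $x\in A_0\cap U$, that one member must be $A_0$, so $U$ meets no other member of $\mathcal{A}$, and because $\mathcal{A}$ covers $X$ every point of $U$ lies in $A_0$; thus $U\subseteq A_0$. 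To see that $A_0$ is closed, let $x\in X\setminus A_0$; since $\mathcal{A}$ covers $X$ there is $A_1\in\mathcal{A}$ with $x\in A_1$, and necessarily $A_1\neq A_0$; picking a neighborhood $U$ of $x$ that meets at most one member of $\mathcal{A}$, that member must be $A_1$, so $U\cap A_0=\emptyset$, i.e. $U\subseteq X\setminus A_0$; hence $X\setminus A_0$ is open.

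I do not expect a serious obstacle here; the one point that requires attention is the closedness half of the claim, where the hypothesis that $\mathcal{A}$ \emph{covers} $X$ is essential: a member of a discrete collection need not be closed on its own (a bounded open interval is a one-element discrete family in $\reals$). The rest is a direct unwinding of the definition of discreteness.
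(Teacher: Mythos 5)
Your proof is correct, and for the nontrivial direction it takes a different route from the paper's. The paper passes to the family $\mathcal{G}=\{\overline{F}:F\in\mathcal{F}\}$ of closures, which is again discrete with pairwise disjoint members, and then uses the fact that a union of any subfamily of a discrete family of closed sets is closed to split $X$ into two disjoint nonempty closed (hence clopen) pieces $U$ and $\bigcup(\mathcal{G}\setminus\{U\})$. You instead show directly from the pointwise definition of discreteness that each single member $A_0$ of a discrete \emph{cover} is clopen, which immediately exhibits a nontrivial clopen set. Your argument is more elementary: it avoids the auxiliary facts that closures of a discrete family are pairwise disjoint and that discrete unions of closed sets are closed, and it isolates exactly where the covering hypothesis enters (closedness of $A_0$), which you correctly flag as the delicate point. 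The paper's version, on the other hand, produces the two-piece partition explicitly and reuses standard machinery about discrete families that appears elsewhere in the paper. The easy direction is identical in both: a two-element disjoint open cover is automatically discrete.
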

\begin{proof}
Suppose $X$ is a space and that $\mathcal{F}$ is a collection of nonempty proper subsets of $X$ such that $\mathcal{F}$ is a discrete family, $\vert\mathcal{F}\vert>1$ and $X = \bigcup\mathcal{F}$. Then also $\mathcal{G} = \{\overline{F}:\; F\in\mathcal{F}\}$ is a discrete family of subsets of $X$ that covers $X$, and $\vert\mathcal{G}\vert >1$. Choose $U\in\mathcal{G}$. Then $U$ is nonempty and closed, and as $\mathcal{G}$ is a discrete family, also $V = \bigcup(\mathcal{G}\setminus\{U\})$ is closed. But then $X = U \cup V$ and $U$ and $V$ are disjoint nonempty open sets, whence $X$ is not connected. Conversely, if $X$ is not connected then a family $\{U,\; V\}$ of disjoint nonempty open sets with union $X$ is a discrete collection consisting of more than one nonempty set.
\end{proof}

From now on call a connected set \emph{nontrivial} if it has more than one element. Recall that a family $\mathcal{P}$ of nonempty open subsets of a topological space is said to be a $\pi$-\emph{base} if there is for each nonempty open subset $U$ of the space an element $V$ of $\mathcal{P}$ such that $V\subseteq U$.

\begin{theorem}\label{ONEmain} Let $X$ be a $\textsf{T}_1$ topological space and let $Y$ be a subspace of $X$ such that 
\begin{enumerate}
\item{$X$ has a $\pi$-base consisting of nontrivial connected sets, and}
\item{ONE has a winning strategy in the game $\textsf{MB}(Y,X)$.}
\end{enumerate}
Then ONE has a winning strategy in the game ${\sf G}^{\omega}_d(\mathcal{O},\mathcal{O}_Y)$.
\end{theorem}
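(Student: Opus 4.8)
The plan is to have ONE combine a winning strategy in $\textsf{MB}(Y,X)$ with the availability of a $\pi$-base $\mathcal{P}$ of nontrivial connected sets, translating each Banach--Mazur move into a move in the screenability game. The key observation linking the two games is Lemma \ref{discreteconnected}: if $B$ is a discrete family refining a cover $\mathcal{U}$, and $V$ is a nontrivial connected open set, then $V$ meets at most one member of $B$ (otherwise $\overline{V}$ would split, contradicting connectedness of $V$); so whatever part of $V$ gets covered by $B$ lies inside a single member $W$ of $B$, and $W$ in turn lies inside a single member of $\mathcal{U}$. Combined with Lemma \ref{coverconstruct}, this means: if ONE is handed, at stage $n$, a ``target'' nontrivial connected open set $V_n$, ONE can play the cover $\mathcal{U}_n$ produced by Lemma \ref{coverconstruct} (applied to $V_n$, which we arrange to be $\neq X$ and of size $>1$), forcing TWO's discrete refinement $B_n$ to leave some point of $V_n$ uncovered unless that point already lies in one fixed member of $B_n$ sitting inside one fixed member of $\mathcal{U}_n$ — and the latter, by construction of $\mathcal{U}_n$, does \emph{not} contain $V_n$. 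The idea is to use the Banach--Mazur strategy to shrink the targets $V_0\supseteq T_0'\supseteq V_1\supseteq \cdots$ so that their intersection meets $Y$, and to arrange that the ``uncovered leftover'' of each $V_n$ survives into the next target, so that a point of $Y$ in the final intersection is covered by no $B_n$.

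In detail: ONE maintains, alongside the play of ${\sf G}^\omega_d(\mathcal{O},\mathcal{O}_Y)$, a simulated play of $\textsf{MB}(Y,X)$ in which ONE uses the given winning strategy $\sigma$. Start with ONE's first $\textsf{MB}$-move $O_0 = \sigma(\emptyset)$; using the $\pi$-base pick a nontrivial connected $V_0\in\mathcal{P}$ with $V_0\subseteq O_0$, and shrink if necessary (using $\textsf{T}_1$ and $|V_0|>1$) to guarantee $V_0\neq X$. ONE plays in the screenability game the cover $\mathcal{U}_0$ given by Lemma \ref{coverconstruct} for the pair $(X,V_0)$, i.e.\ no member of $\mathcal{U}_0$ contains $V_0$. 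TWO answers with a discrete refinement $B_0$ of $\mathcal{U}_0$. By the connectedness argument above, the set $V_0\cap\bigcup B_0$ is contained in a single $W_0\in B_0$, hence in a single $U_0\in\mathcal{U}_0$, so $V_0\setminus U_0$ is a nonempty open set disjoint from $\bigcup B_0$; feed $V_0\setminus U_0$ (or a nonempty open subset of it) to $\sigma$ as TWO's $\textsf{MB}$-response $T_0$, obtaining $O_1 = \sigma(O_0,T_0)\subseteq T_0$, pick $V_1\in\mathcal{P}$ nontrivial connected with $V_1\subseteq O_1$ and $V_1\neq X$, and repeat. Continuing through all $n<\omega$ produces nonempty open sets $O_0\supseteq T_0\supseteq O_1\supseteq\cdots$ forming a legal $\sigma$-play; since $\sigma$ wins $\textsf{MB}(Y,X)$ for ONE, there is a point $p\in Y\cap\bigcap_n O_n$. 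For each $n$, $O_{n+1}\subseteq T_n\subseteq V_n\setminus U_n$, which is disjoint from $\bigcup B_n$; hence $p\notin\bigcup B_n$ for every $n$, so $p\in Y$ is covered by no $B_n$, i.e.\ $\bigcup_n B_n\notin\mathcal{O}_Y$. Thus the described strategy is winning for ONE in ${\sf G}^\omega_d(\mathcal{O},\mathcal{O}_Y)$.

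The main obstacle, and the point requiring care, is the bookkeeping that keeps the ``leftover'' nonempty and open at every stage and nests it inside the next Banach--Mazur target while staying inside the $\pi$-base. Concretely one must check: (i) $V_n\cap\bigcup B_n$ really is confined to one member of $B_n$ — this needs $V_n$ connected and $B_n$ discrete, via Lemma \ref{discreteconnected} applied to the trace of $B_n$ (and its closures) on $\overline{V_n}$; (ii) the resulting $V_n\setminus U_n$ is genuinely nonempty, which is exactly what Lemma \ref{coverconstruct} buys us since $U_n\in\mathcal{U}_n$ and no member of $\mathcal{U}_n$ contains $V_n$; (iii) one can always shrink a $\pi$-base element to be a proper subset of $X$ with at least two points — here we use that $X$ is $\textsf{T}_1$ and infinite, so no nonempty open set need be reduced to a point and we can always avoid at least one point of $X$. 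Everything else is a routine translation between the two games.
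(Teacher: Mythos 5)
Your overall architecture --- simulating a play of $\textsf{MB}(Y,X)$ while using Lemma \ref{coverconstruct} to choose ONE's covers and Lemma \ref{discreteconnected} to manufacture a nonempty open ``leftover'' to feed back to $\sigma$ --- is exactly the paper's. But your key observation is false, and the error propagates. You claim that a nontrivial connected open set $V$ meets at most one member of a discrete family $B$, ``otherwise $\overline{V}$ would split.'' That is not what Lemma \ref{discreteconnected} says: a connected set cannot be \emph{covered} by a discrete family of more than one nonempty proper pieces, but it can perfectly well \emph{meet} many members of a discrete family. For instance $B=\{\lbrack\frac14,\frac13\rbrack,\lbrack\frac23,\frac34\rbrack\}$ is a discrete family (finite, with disjoint closures) and the connected open set $(0,1)$ meets both of its members; or take $\mathbb{R}$ with the discrete family $\{\{n\}:n\in\mathbb{Z}\}$. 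Consequently $V_n\cap\bigcup B_n$ need not lie in a single $W_n\in B_n$, so your leftover $T_n=V_n\setminus U_n$ need not be disjoint from $\bigcup B_n$, and the point $p\in Y\cap\bigcap_n O_n$ you produce at the end could perfectly well be covered by some $B_n$; the argument that TWO loses collapses. A secondary problem: $U_n$ is open, so $V_n\setminus U_n$ is the intersection of an open set with a closed set and is not in general open, hence not a legal response for TWO in the Banach--Mazur game.

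The repair is the paper's choice of leftover: set $T_n = V_n\setminus\bigcup\{\overline{W}: W\in B_n\}$. This set is open, because the closures of a discrete family again form a discrete (hence locally finite) family of closed sets, whose union is closed; and it is nonempty, because the traces $\{\overline{W}\cap V_n: W\in B_n\}$ form a discrete family of relatively closed subsets of the connected set $V_n$ and therefore cannot cover $V_n$ (Lemma \ref{discreteconnected}, the degenerate single-trace case being handled via the property of the cover supplied by Lemma \ref{coverconstruct}). With that substitution your bookkeeping goes through and the argument coincides with the paper's proof.
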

\begin{proof}
Let $\sigma$ be ONE's winning strategy in the game $\textsf{MB}(Y,X)$. We may assume that $\sigma$ calls on ONE to play elements of a fixed $\pi$-base consisting of nontrivial connected open sets. Define a strategy $F$ for ONE of the game ${\sf G}^{\omega}_d(\mathcal{O},\mathcal{O})$ as follows:

To begin, consider $O_0 = \sigma(X)$, and apply Lemma \ref{coverconstruct} to define $F(\emptyset)$, ONE's first move in ${\sf G}^{\omega}_d(\mathcal{O},\mathcal{O})$, to be an open cover for which no element contains $O_0$ as a subset. 
If TWO's response is the discrete open refinement $\mathcal{T}_0$, by Lemma \ref{discreteconnected}  the discrete family $\{\overline{T}:T\in\mathcal{T}_0\}$ does not cover $O_0$. Let TWO of the game $\textsf{MB}(Y,X)$ play $T_0 = O_0\setminus\bigcup\{\overline{T}:T\in\mathcal{T}_0\}$ a nonempty open set. 

Let $O_1 = \sigma(T_0)$ be ONE's response in the game $\textsf{MB}(Y,X)$. ONE's move $F(\mathcal{T}_0)$ in the strong screenability game is an open cover of $X$ for which no member has $O_1$ as a subset. 
TWO's response, $\mathcal{T}_1$ is a discrete open refinement of $F(\mathcal{T}_0)$. As $\{\overline{T}:\; T\in\mathcal{T}_1\}$  does not cover $O_1$, $T_1 = O_1\setminus\bigcup\{\overline{T}:T\in\mathcal{T}_1\}$ is a legal move for TWO in the game $\textsf{MB}(Y,X)$. 

In the next inning ONE of the game $\textsf{MB}(Y,X)$ responds with $O_2 = \sigma(T_0,T_1)$. ONE's move $F(\mathcal{T}_0,\; \mathcal{T}_1)$ in the strong screenability game is an open cover of $X$ (as in Lemma \ref{coverconstruct}) for which no member has $O_2$ as a subset.
TWO's response, $\mathcal{T}_2$ is a discrete open refinement of $F(\mathcal{T}_0,\;\mathcal{T}_1)$. By Lemma \ref{discreteconnected} $\{\overline{T}:\; T\in\mathcal{T}_2\}$ cannot cover $O_2$, whence $T_2 = O_2\setminus\bigcup\{\overline{T}:\; T\in\mathcal{T}_2$ is a legal move for TWO of the game $\textsf{MB}(Y,X)$. Then $O_3 = \sigma(T_0,\; T_1,\;T_2\}$ is a legal move for ONE in the Banach-Mazur game, and so on.

This outlines a definition of a strategy $F$ for ONE in the strong screenabilty game. Corresponding to an $F$ play we have a sequence
\[
  O_0 \supseteq T_0\supseteq O_1 \supseteq T_1 \supseteq O_2 \supseteq T_2\supseteq  O_3\supseteq \cdots
\]
of nonempty open sets such that for each $n$ the open set $\bigcup(\mathcal{T}_1\cup\ldots\cup\mathcal{T}_n)$ is disjoint from $O_{n+1}$. Since $\sigma$ is a winning strategy for ONE of the game $\textsf{MB}(Y,X)$, $Y\cap(\bigcap_{n<\infty}O_n)$ is nonempty. Thus $\bigcup_{n<\infty}\mathcal{T}_n$ is not a cover of $Y$, and TWO looses $F$-plays of ${\sf G}^{\omega}_d(\mathcal{O},\mathcal{O}_Y)$.
\end{proof}

\begin{corollary}\label{Peanolike}
If $X$ is a compact locally connected $\textsf{T}_1$-space, then ONE has a winning strategy in the game ${\sf G}^{\omega}_d(\mathcal{O},\mathcal{O})$.
\end{corollary}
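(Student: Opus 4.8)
My plan is to derive the corollary from Theorem~\ref{ONEmain} applied with $Y = X$, so that the two things to check are: (1) that $X$ (or a closed subspace to which I can reduce) has a $\pi$-base consisting of nontrivial connected open sets, and (2) that ONE has a winning strategy in the Banach--Mazur game $\textsf{MB}(X,X)$.

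For (1): local connectedness says precisely that the connected open subsets of $X$ form a base, hence a $\pi$-base; the only nuisance is that an open singleton is connected but trivial. I would dispose of this by observing that the set $D$ of isolated points of $X$ is open and, because a connected open neighborhood of a point of $\overline{D}$ would have to collapse to a single point (using $\textsf{T}_1$, much as in Lemma~\ref{discreteconnected}), $D$ is in fact clopen; being a closed discrete subspace of the compact space $X$ it is finite. Then $X' = X\setminus D$ is a closed, hence compact, infinite subspace of $X$ that is again $\textsf{T}_1$ and locally connected and now has no isolated point, so every nonempty open subset of $X'$ contains a connected open subset of $X'$ which, lacking isolated points, is nontrivial; thus $X'$ has the required $\pi$-base. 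By the Proposition of Section~2 (passing winning strategies of ONE from a closed subspace to the whole space) it is then enough to win the game on $X'$.

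For (2): I would have ONE win $\textsf{MB}(X',X')$ by a shrinking strategy. After TWO plays the nonempty open set $T_n$, ONE plays a nonempty open $O_{n+1}$ with $\overline{O_{n+1}} \subseteq T_n$; then $(\overline{O_{n+1}})_{n<\omega}$ is a decreasing chain of nonempty closed subsets of the compact space $X'$, so $\bigcap_{n<\omega}\overline{O_{n+1}} \neq \emptyset$, and since $\overline{O_{n+1}}\subseteq T_n \subseteq O_n$ this intersection is contained in $\bigcap_{n<\omega} O_n$, giving ONE the win. Feeding this into Theorem~\ref{ONEmain} applied to $X'$ yields a winning strategy for ONE in ${\sf G}^{\omega}_d(\mathcal{O},\mathcal{O})$ on $X'$, and then the Proposition lifts it to $X$.

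The step I expect to be the real obstacle is the shrinking ``$\overline{O_{n+1}} \subseteq T_n$'': this is automatic in a regular space, and compact Hausdorff spaces are regular (indeed normal), but a compact $\textsf{T}_1$ space need not be regular --- the cofinite topology on a countable set is compact, $\textsf{T}_1$ and even locally connected, yet there one checks directly that TWO, not ONE, wins both $\textsf{MB}$ and ${\sf G}^{\omega}_d(\mathcal{O},\mathcal{O})$. So I would read the corollary as intended for compact \emph{Hausdorff} (equivalently, compact regular $\textsf{T}_1$) locally connected spaces --- Peano continua, and in particular the closed unit interval --- where hypothesis (2) of Theorem~\ref{ONEmain} comes for free and the argument genuinely goes through.
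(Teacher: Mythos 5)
The paper offers no proof of this corollary --- it is presented as an immediate consequence of Theorem~\ref{ONEmain} --- and your proposal follows exactly that intended route, so in outline you and the authors agree. But your write-up supplies two details the paper glosses over, and one of them is substantive. The minor one is the isolated-point reduction: local connectedness gives a base of connected open sets, but not automatically of \emph{nontrivial} ones, and your argument that the set $D$ of isolated points is clopen (a connected open neighborhood of a point of $\overline{D}$ must reduce to a singleton, by $\textsf{T}_1$) and hence finite by compactness, followed by passing to the closed subspace $X\setminus D$ via the Proposition of Section~2, is a correct and necessary patch. The substantive point is your observation about regularity. ONE's winning strategy in $\textsf{MB}(X)$ on a compact space is obtained by shrinking so that $\overline{O_{n+1}}\subseteq T_n$, and this needs regularity, which compact $\textsf{T}_1$ does not supply. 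Your cofinite-topology example shows this is not a removable technicality: a countably infinite set with the cofinite topology is compact, $\textsf{T}_1$, and locally connected (every nonempty open set is connected), yet any discrete family of nonempty open sets there has at most one member, so TWO wins ${\sf G}^{\omega}_d(\mathcal{O},\mathcal{O})$ by covering, one cofinite set per inning, the finite set of points still missed. Hence the corollary is false as literally stated, and the hypothesis should read compact \emph{Hausdorff} (equivalently compact regular $\textsf{T}_1$) locally connected --- which covers the Peano spaces the authors actually invoke it for. Your proof is correct under that corrected hypothesis.
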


Examples of compact locally connected spaces abound. A metrizable compact connected locally connected space is called a \emph{Peano space}. The unit interval is an example of a Peano space. By the Hahn-Mazurkiewicz Theorem a ${\sf T}_2$ space is a Peano space if, and only if, it is a continuous image of the closed unit interval. 

Observe that if $Y$ is a dense $\textsf{G}_{\delta}$ set in the space $X$, then ONE has a winning strategy in $\textsf{MB}(X)$ if, and only if, ONE has a winning strategy in $\textsf{MB}(Y,X)$. 
\begin{corollary}\label{relative2}
Let $Y$ be a dense $\textsf{G}_{\delta}$ subspace of the $\textsf{T}_1$-space $X$ such that
\begin{enumerate}
\item{$X$ has a $\pi$-base consisting of nontrivial connected sets, and}
\item{ONE has a winning strategy in the game on $\textsf{MB}(X)$.}
\end{enumerate}
Then ONE has a winning strategy in the game ${\sf G}^{\omega}_d(\mathcal{O},\mathcal{O}_Y)$ on $X$.
\end{corollary}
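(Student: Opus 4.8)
The plan is to derive Corollary \ref{relative2} directly from Theorem \ref{ONEmain} together with the parenthetical observation immediately preceding it, namely that for a dense $\textsf{G}_\delta$ subset $Y$ of $X$, ONE has a winning strategy in $\textsf{MB}(X)$ if and only if ONE has a winning strategy in $\textsf{MB}(Y,X)$. So the corollary is essentially a re-packaging: hypothesis (1) of the corollary is literally hypothesis (1) of the theorem, and hypothesis (2) of the corollary becomes hypothesis (2) of the theorem via that observation.

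First I would verify the ``only if'' direction of the observation, which is the one we actually need. Suppose $\sigma$ is a winning strategy for ONE in $\textsf{MB}(X)$, i.e. ONE wins plays $O_0\supseteq T_0\supseteq O_1\supseteq\cdots$ whenever $\bigcap_n O_n\neq\emptyset$. We must produce a strategy for ONE in $\textsf{MB}(Y,X)$ forcing $Y\cap\bigcap_n O_n\neq\emptyset$. The idea is to write $Y=\bigcap_k W_k$ with each $W_k$ open dense in $X$, and have ONE play $\sigma$ but shrink its moves into the $W_k$'s in a bookkeeping fashion: at stage $k$, before delivering $\sigma$'s move $O_k$, replace it by a nonempty open $O_k'\subseteq O_k\cap W_k$ (possible since $W_k$ is dense), then feed $O_k'$ (as if it were TWO's previous response, or rather adjust so that $\sigma$'s internal record sees the shrunk sets). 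Carrying this out carefully yields a play in which $\bigcap_n O_n=\bigcap_n O_n'$ lies inside every $W_k$, hence inside $Y$, and is nonempty because $\sigma$ is winning for $\textsf{MB}(X)$. The ``if'' direction is trivial since a winning strategy in $\textsf{MB}(Y,X)$ is a fortiori one in $\textsf{MB}(X)$ (as $Y\subseteq X$), but we do not even need it here.

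With the observation in hand, the corollary is immediate: from hypothesis (2) of the corollary and the observation, ONE has a winning strategy in $\textsf{MB}(Y,X)$; combined with hypothesis (1), both hypotheses of Theorem \ref{ONEmain} hold, so ONE has a winning strategy in ${\sf G}^\omega_d(\mathcal{O},\mathcal{O}_Y)$. I would then simply state this chain.

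The main obstacle is the careful bookkeeping in the shrinking argument for the observation — one has to interleave the ``diagonalize into $W_k$'' steps with the moves dictated by $\sigma$ in a way that keeps every move legal (nonempty, open, nested correctly) and still lets the winning property of $\sigma$ be invoked. This is routine but must be set up so that the final intersection is genuinely captured inside all the $W_k$; the cleanest route is to have ONE, at inning $k$, first compute $\sigma$ of the current history and then play the \emph{shrunk} set $O_k\cap W_k\cap(\text{a nonempty open subset})$, feeding back to $\sigma$ the (shrunk) responses so that consistency of the nesting $O_k\supseteq T_k\supseteq O_{k+1}$ is preserved throughout.
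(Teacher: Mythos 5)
Your proposal is correct and matches the paper's intent exactly: the corollary is stated there as an immediate consequence of Theorem \ref{ONEmain} together with the preceding observation that for a dense $\textsf{G}_{\delta}$ subset $Y$ of $X$, ONE wins $\textsf{MB}(X)$ if and only if ONE wins $\textsf{MB}(Y,X)$. Your sketch of the ``shrink into the $W_k$'s'' argument for that observation is sound (the nesting $O_n\supseteq O_n'\supseteq T_n\supseteq O_{n+1}$ makes the two intersections coincide and land inside $Y$), and in fact supplies a detail the paper leaves to the reader.
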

${\mathbb P}$, the set of irrational numbers, is a dense $\textsf{G}_{\delta}$ subset of  ${\mathbb R}$, the real line. Corollary \ref{relative2} implies that ONE has a winning strategy in the game $\textsf{G}^{\omega}_d(\mathcal{O},\mathcal{O}_{\mathbb P})$ on the real line. 

\section{Player TWO}

\begin{lemma}\label{ultrapcpt} For a topological space $X$ the following are equivalent:
\begin{enumerate}
\item{$X$ is an ultraparacompact space.}
\item{TWO has a winning strategy in the game ${\sf G}^1_d(\mathcal{O},\mathcal{O})$.}
\end{enumerate}
\end{lemma}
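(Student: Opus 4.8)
The plan is to observe that the single-inning game ${\sf G}^1_d(\mathcal{O},\mathcal{O})$ involves no real quantifier alternation: a strategy for TWO is just a function $\sigma$ assigning to each opening cover $\mathcal{U}$ of ONE a response $\sigma(\mathcal{U})$, and such a strategy is winning precisely when, for every open cover $\mathcal{U}$, the value $\sigma(\mathcal{U})$ is a legal move (a discrete family refining $\mathcal{U}$) that moreover lies in $\mathcal{O}$ (i.e. is itself an open cover of $X$). So the statement reduces to: \emph{for every open cover $\mathcal{U}$ there is a discrete open cover of $X$ refining $\mathcal{U}$} if and only if \emph{$X$ is ultraparacompact}, and the remaining work is to see that ``discrete open cover'' and ``disjoint open cover'' amount to the same thing here.

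For $(1)\Rightarrow(2)$ I would assume $X$ ultraparacompact and, for each open cover $\mathcal{U}$, use ultraparacompactness together with the axiom of choice to select a refinement $\sigma(\mathcal{U})$ of $\mathcal{U}$ by pairwise disjoint open sets that still covers $X$. By the remark recorded in the introduction that a disjoint family of open sets covering a space is automatically discrete, $\sigma(\mathcal{U})$ is a legal move for TWO; and since it is an open cover of $X$, it lies in $\mathcal{O}$, so the one-inning play it produces is won by TWO. Hence $\sigma$ is a winning strategy.

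For $(2)\Rightarrow(1)$ I would take a winning strategy $\sigma$ for TWO, fix an arbitrary open cover $\mathcal{U}$, and consider the play in which ONE plays $\mathcal{U}$. Legality of $\sigma$ makes $\mathcal{V}:=\sigma(\mathcal{U})$ a discrete family refining $\mathcal{U}$, and the fact that $\sigma$ wins forces $\mathcal{V}\in\mathcal{O}$, i.e. $\mathcal{V}$ is a discrete \emph{open} cover of $X$ refining $\mathcal{U}$. The last small step is that a discrete open cover is pairwise disjoint: if distinct $V_1,V_2\in\mathcal{V}$ shared a point $x$, then every neighborhood of $x$ would meet both $V_1$ and $V_2$, contradicting discreteness of $\mathcal{V}$ at $x$. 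Thus $\mathcal{V}$ witnesses that $\mathcal{U}$ has a disjoint open refinement covering $X$, and since $\mathcal{U}$ was arbitrary, $X$ is ultraparacompact.

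No step here is a genuine obstacle; the only points worth stating carefully are the (essentially trivial) reduction of the one-inning game to a quantifier statement and the equivalence of ``discrete open cover'' with ``disjoint open cover'' — one direction of which is already noted in the introduction, the other requiring only the one-line argument above. I would present these explicitly and keep the rest brief.
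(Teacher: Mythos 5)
Your argument is correct; note that the paper states this lemma without any proof, evidently regarding it as an easy verification, and your write-up is exactly the intended one: the one-inning game reduces to the assertion that every open cover has a discrete open refinement covering $X$, and a discrete open cover is pairwise disjoint (and conversely, as the introduction notes). Both directions and the small step identifying ``discrete'' with ``disjoint'' for open covers are handled correctly.
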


With ${\mathbb S}$ the Sorgenfrey line, ${\mathbb S}\times {\mathbb S}$ is zero-dimensional and regular, but not normal, thus not paracompact, and thus by the Michael-Nagami Theorem, not strongly screenable. Thus, ONE has a winning strategy in the game $\textsf{G}^{\omega}_d(\mathcal{O},\mathcal{O})$ on ${\mathbb S}\times{\mathbb S}$, while TWO has a winning strategy in $\textsf{G}^1_d(\mathcal{O},\mathcal{O})$ on ${\mathbb S}$.  
In \cite{Roy}  P. Roy constructed a complete (non-separable) metric space $X$ of cardinality $2^{\aleph_0}$ which is zero-dimensional, has Lebesgue covering dimension 1, and is not ultraparacompact. Roy's example is a complete zero-dimensional metric space for which TWO does not have a winning strategy in $\textsf{G}^1_d(\mathcal{O},\mathcal{O})$ and thus not in $\textsf{G}_d^{\omega}(\mathcal{O},\mathcal{O})$, as we shall see in Theorem \ref{twowinsomega}.

Zerodimensional Lindel\"of spaces are ultraparacompact. Thus, 
\begin{corollary}\label{zerodim} For Lindel\"of  space $X$ the following are equivalent:
\begin{enumerate}
\item{$X$ is zero-dimensional.}
\item{TWO has a winning strategy in $\textsf{G}^1_d(\mathcal{O},\mathcal{O})$ on $X$.}
\end{enumerate}
\end{corollary}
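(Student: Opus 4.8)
The plan is to route everything through Lemma~\ref{ultrapcpt}, which tells us that condition (2) --- TWO has a winning strategy in ${\sf G}^1_d(\mathcal{O},\mathcal{O})$ on $X$ --- holds if and only if $X$ is ultraparacompact. So the corollary reduces to the purely topological statement that, for a Lindel\"of space $X$ (with enough separation, see the last paragraph), ultraparacompactness and zero-dimensionality are equivalent.

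For $(1)\Rightarrow(2)$ I would simply invoke the remark immediately preceding the corollary, that zero-dimensional Lindel\"of spaces are ultraparacompact, and then apply Lemma~\ref{ultrapcpt}. For completeness the short argument is: given an open cover $\mathcal{U}$, use zero-dimensionality to pass to a refinement $\mathcal{B}$ of $\mathcal{U}$ by clopen sets, use the Lindel\"of property to extract a countable subcover $\{B_n:n<\omega\}$, and disjointify by setting $C_0=B_0$ and $C_n=B_n\setminus\bigcup_{k<n}B_k$. Each $C_n$ is clopen (a finite Boolean combination of clopen sets), the $C_n$ are pairwise disjoint, $\bigcup_n C_n=X$, and $C_n\subseteq B_n\subseteq$ some member of $\mathcal{U}$; thus $\{C_n:n<\omega\}$ is a disjoint open refinement of $\mathcal{U}$ covering $X$, which witnesses ultraparacompactness, and Lemma~\ref{ultrapcpt} then gives that TWO wins ${\sf G}^1_d(\mathcal{O},\mathcal{O})$.

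For $(2)\Rightarrow(1)$, Lemma~\ref{ultrapcpt} gives that $X$ is ultraparacompact, and I must deduce zero-dimensionality. Fix $x\in X$ and an open neighborhood $U$ of $x$; the goal is a clopen $V$ with $x\in V\subseteq U$. Assuming $X$ is $\textsf{T}_1$, apply ultraparacompactness to the open cover $\{U,\ X\setminus\{x\}\}$ to obtain a disjoint open cover $\mathcal{V}$ refining it; the unique $V\in\mathcal{V}$ with $x\in V$ cannot be a subset of $X\setminus\{x\}$, so $V\subseteq U$, while $X\setminus V=\bigcup(\mathcal{V}\setminus\{V\})$ is open, so $V$ is clopen. (If one prefers to assume only regularity, first choose open $W$ with $x\in W\subseteq\overline{W}\subseteq U$, run ultraparacompactness on $\{U,\ X\setminus\overline{W}\}$, and let $V$ be the union of those members of the resulting disjoint open cover that meet $W$; each such member lies in $U$, so $x\in W\subseteq V\subseteq U$, and again $V$ is clopen because its complement is the union of the remaining members.) Hence every point has a clopen neighborhood base, i.e.\ $X$ is zero-dimensional.

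The one point to watch --- and the only place where anything beyond routine bookkeeping enters --- is that $(2)\Rightarrow(1)$ genuinely needs a separation axiom: without it, an infinite set with the excluded-point topology is compact (hence Lindel\"of) and ultraparacompact, yet not zero-dimensional. This causes no trouble here, since the spaces relevant to the corollary (metric spaces, the Sorgenfrey line, Roy's example, and Lindel\"of spaces as used elsewhere in the paper) are $\textsf{T}_1$, indeed regular.
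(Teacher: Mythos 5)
Your proposal is correct and follows essentially the same route the paper intends: reduce both conditions to ultraparacompactness via Lemma~\ref{ultrapcpt}, use the disjointification of a countable clopen refinement for $(1)\Rightarrow(2)$, and recover clopen neighborhoods from disjoint open refinements of two-element covers for $(2)\Rightarrow(1)$. Your added observation that $(2)\Rightarrow(1)$ genuinely requires a separation axiom (witnessed by the excluded-point topology) is a correct refinement of a point the paper passes over silently.
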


 Balogh's space mentioned in Theorem \ref{balogh} and constructed in \cite{Balogh} is a union of countably many open sets, each ultraparacompact. Thus TWO has a winning strategy in $\textsf{G}^{\omega}_c(\mathcal{O},\mathcal{O})$. As this space is not strongly screenable ONE has a winning strategy in $\textsf{G}^{\alpha}_d(\mathcal{O},\mathcal{O})$ for each countable ordinal $\alpha$.

The existence of winning strategies for TWO in the relative version of the game seems more delicate. 
The following fact about extending open sets from a subspace to a containing space can be found in Theorem 3 on p. 227 of \cite{KuratowskiI}. Observe that the metric spaces in Lemma \ref{extend} are \emph{not} assumed to be separable.
\begin{lemma}\label{extend} Let $X$ be a metric space and let $Y$ be a subset of $X$. For each family $\{U_i:\; i\in I\}$ of subsets of $Y$ open in the relative topology of $Y$ there exists a family $\{V_i:\; i\in I\}$ of sets open in $X$ such that
\begin{enumerate}
\item{For each $i\in I$ we have $U_i = Y\cap V_i$ and}
\item{For every finite set $J\subseteq I$, if $\bigcap_{j\in J}U_j = \emptyset$, then $\bigcap_{j\in J}\overline{V}_j = \bigcap_{j\in J}\overline{U}_j$, where the closures are computed in $X$.}
\end{enumerate}
\end{lemma}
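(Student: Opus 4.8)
The plan is to reduce the claim to a classical fact about extending open sets in metric spaces while keeping control over finite intersections of closures, which is exactly the content of Theorem~3 on p.~227 of Kuratowski's book (Vol.~I). First I would recall the standard extension statement in the form: given $Y\subseteq X$ metric and a relatively open set $U\subseteq Y$, there is an open $V\subseteq X$ with $U = Y\cap V$. The usual construction takes $V = \{x\in X : d(x,Y\setminus U) > d(x,U)\}$, or equivalently one writes $Y\setminus U$ as a closed subset of $Y$, extends it, and complements; either way one checks $U = Y\cap V$ directly from the definitions of the distance functions. This handles conclusion~(1) for each $i$ individually, and I would apply it uniformly to the whole family $\{U_i : i\in I\}$, obtaining $\{V_i : i\in I\}$.

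The substantive step is conclusion~(2): for every finite $J\subseteq I$, whenever $\bigcap_{j\in J} U_j = \emptyset$ we must have $\bigcap_{j\in J}\overline{V_j} = \bigcap_{j\in J}\overline{U_j}$ (closures in $X$). One inclusion, $\bigcap_{j\in J}\overline{U_j} \subseteq \bigcap_{j\in J}\overline{V_j}$, is automatic from $U_j\subseteq V_j$. For the reverse inclusion I would argue pointwise: suppose $x\in\overline{V_j}$ for every $j\in J$; I want $x\in\overline{U_j}$ for every $j$. The key is that with the distance-function definition of $V_j$, membership in $\overline{V_j}$ forces $d(x,U_j)\le d(x,Y\setminus U_j)$, hence in particular $d(x,U_j)\le d(x,\bigcap_{k\in J,\,k\ne j}U_k)$ type estimates; then using $\bigcap_{j\in J}U_j=\emptyset$, i.e. that the $U_j$ have no common point, one derives that $d(x,U_j)=0$ for each $j\in J$, which is exactly $x\in\overline{U_j}$. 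So the heart of the matter is choosing the extension $V_j$ so that points in $\overline{V_j}$ are, distance-wise, at least as close to $U_j$ as to the part of $Y$ missing $U_j$, and then chasing the finite-intersection-empty hypothesis through these inequalities.

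Concretely, I would set, for each $i\in I$,
\[
  V_i \;=\; \Bigl\{x\in X \;:\; d(x, U_i) < d\bigl(x, Y\setminus U_i\bigr)\Bigr\},
\]
noting that each $V_i$ is open (the two distance functions are continuous) and that $Y\cap V_i = U_i$ because for $y\in Y$ one has $d(y,U_i)=0 \iff y\in\overline{U_i}^{\,Y}$ and $d(y,Y\setminus U_i)=0 \iff y\in\overline{Y\setminus U_i}^{\,Y}$, and $U_i$ is open in $Y$. For (2), fix finite $J$ with $\bigcap_{j\in J}U_j=\emptyset$ and take $x\in\bigcap_{j\in J}\overline{V_j}$. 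For each $j$, approximating $x$ by points of $V_j$ gives $d(x,U_j)\le d(x,Y\setminus U_j)$. Now if some $d(x,U_{j_0})>0$, pick $\varepsilon>0$ smaller than all the positive values among $\{d(x,U_j):j\in J\}$; then for $j$ with $d(x,U_j)>0$ every point of $Y$ within $\varepsilon$ of $x$ lies outside $U_j$, and for $j$ with $d(x,U_j)=0$ one still gets points of $\overline{U_j}$ near $x$ — and a short argument using that the $U_j$ are open in $Y$ and have empty common intersection shows the inequalities $d(x,U_j)\le d(x,Y\setminus U_j)$ cannot all hold unless every $d(x,U_j)=0$. Hence $x\in\bigcap_{j\in J}\overline{U_j}$, giving the reverse inclusion.

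The main obstacle I anticipate is precisely this last pointwise computation: making rigorous the passage from ``$\bigcap_{j\in J}U_j=\emptyset$ and $d(x,U_j)\le d(x,Y\setminus U_j)$ for all $j\in J$'' to ``$d(x,U_j)=0$ for all $j\in J$''. The delicate case is when $x$ is a limit point in $X$ of $Y$ that is not in $Y$, so that there may be sequences from different $U_j$'s converging to $x$ while no single point of $Y$ lies in all $U_j$. One clean way around this is to replace the raw distance $d(x,U_i)$ in the definition of $V_i$ by a suitably normalized quantity (as in Kuratowski's actual argument, which uses an explicit separation of $\overline{U_i}^{\,Y}$ from $\overline{Y\setminus U_i}^{\,Y}$ inside $Y$ and then extends), ensuring the finite-intersection bookkeeping goes through; since the lemma is quoted from \cite{KuratowskiI} I would, in the final write-up, either reproduce that construction verbatim or simply cite it, and restrict original effort to verifying that the stated conclusions (1) and (2) are exactly what that construction delivers in the possibly-non-separable setting, the non-separability causing no trouble since only finitely many indices are involved in (2) at a time.
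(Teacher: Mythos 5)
The paper offers no proof of this lemma at all; it is quoted from Kuratowski (Theorem 3, p.~227 of Vol.~I), so your fallback of simply citing that source is exactly what the authors do. The trouble is with the explicit construction you put forward as the intended argument. The sets $V_i=\{x\in X:\ d(x,U_i)<d(x,Y\setminus U_i)\}$ do satisfy $Y\cap V_i=U_i$, but they fail conclusion (2). Concretely, take $X=\mathbb{R}$, $Y=\{0,1\}$, $U_1=\{0\}$, $U_2=\{1\}$: then $V_1=(-\infty,\tfrac12)$ and $V_2=(\tfrac12,\infty)$, so $\overline{V_1}\cap\overline{V_2}=\{\tfrac12\}$ while $\overline{U_1}\cap\overline{U_2}=\emptyset$. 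The defect is precisely the step you flag as the main obstacle: passing to closures only yields the non-strict inequality $d(x,U_j)\le d(x,Y\setminus U_j)$, and a point equidistant from $U_j$ and $Y\setminus U_j$ can lie in every $\overline{V_j}$ without being close to any $U_j$. So the ``short argument'' you invoke does not exist for this choice of $V_i$; with that definition the step genuinely fails.

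The repair is small but essential: build an asymmetry into the definition, for instance
\[
V_i=\bigl\{x\in X:\ 2\,d(x,U_i)<d(x,Y\setminus U_i)\bigr\}.
\]
These sets are open, and $Y\cap V_i=U_i$ as before (relative openness of $U_i$ in $Y$ gives $d(y,Y\setminus U_i)>0$ for $y\in U_i$, while $y\in Y\setminus U_i$ forces $d(y,Y\setminus U_i)=0$). For (2), suppose $\bigcap_{j\in J}U_j=\emptyset$ and $x\in\bigcap_{j\in J}\overline{V_j}$. By continuity of the distance functions, $2d(x,U_j)\le d(x,Y\setminus U_j)$ for each $j\in J$; since $Y=U_j\cup(Y\setminus U_j)$ this gives $d(x,U_j)=d(x,Y)=:r$ and $d(x,Y\setminus U_j)\ge 2r$ for every $j\in J$. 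If $r>0$, pick $j_0\in J$ and $y\in U_{j_0}$ with $d(x,y)<2r$; then $y\notin Y\setminus U_j$ for every $j\in J$, hence $y\in\bigcap_{j\in J}U_j$, a contradiction. Therefore $r=0$ and $x\in\bigcap_{j\in J}\overline{U_j}$; the reverse inclusion is immediate from $U_j\subseteq V_j$. With this one modification your outline becomes a complete, correct proof, and, as you note, non-separability plays no role since only finitely many indices are involved at a time.
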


\begin{lemma}\label{twowinscompactzerodim}
Let $X$ be a metric space and let $Y$ be a closed, ultraparacompact subspace of $X$. 
Then TWO has a winning strategy in the game ${\sf G}_d^{1}(\mathcal{O},\mathcal{O}_Y)$.
\end{lemma}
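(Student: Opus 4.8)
The plan is, given an open cover $\mathcal{U}$ of $X$ played by ONE, to manufacture a single discrete family of sets open in $X$ that refines $\mathcal{U}$ and covers $Y$; prescribing this family as TWO's reply will be the desired winning strategy (the game has only one inning). Fix a metric $d$ inducing the topology of $X$. First I would descend to $Y$: the family $\{Y\cap U:\ U\in\mathcal{U}\}$ is a cover of $Y$ by relatively open sets, so ultraparacompactness of $Y$ yields a refinement $\{W_i:\ i\in I\}$ by pairwise disjoint relatively open sets that still covers $Y$. Because the $W_i$ are pairwise disjoint and open in $Y$, each $W_i=Y\setminus\bigcup_{j\ne i}W_j$ is closed in $Y$, hence closed in $X$ (as $Y$ is closed in $X$); also $\bigcup_{i\in I}W_i=Y$.

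Second, I would observe that $\{W_i:\ i\in I\}$ is discrete \emph{in $X$}, not just in $Y$: any $x\notin Y$ has the open neighbourhood $X\setminus Y$ meeting no $W_i$, and any $y\in Y$ lies in exactly one $W_{i_0}$, which has the form $Y\cap G$ with $G$ open in $X$, and $G$ meets $W_j$ only when $j=i_0$ since $W_j\subseteq Y$. So $\{W_i:\ i\in I\}$ is a pairwise disjoint, discrete family of closed subsets of the metric space $X$ whose union is the closed set $Y$.

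Third --- the only step requiring real work --- I would ``swell'' each $W_i$ to an open set while preserving discreteness. After deleting the empty $W_i$, put
\[
  O_i=\bigl\{x\in X:\ d(x,W_i)<\tfrac{1}{3}\,d(x,Y\setminus W_i)\bigr\},
\]
with the convention $d(x,\emptyset)=+\infty$ (relevant only in the trivial case $\vert I\vert=1$). Each $O_i$ is open (it is $\{f<g\}$ for continuous $f,g$) and contains $W_i$, since on $W_i$ the left-hand side vanishes while the right-hand side is positive ($Y\setminus W_i$ is closed and misses $W_i$). To see $\{O_i:\ i\in I\}$ is discrete, take $x_0\in X$; if $x_0\in W_{i_0}$ set $3\eta=d(x_0,Y\setminus W_{i_0})>0$, and if $x_0\notin Y$ set $3\eta=d(x_0,Y)>0$. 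In both cases the ball $B(x_0,\eta)$ meets at most one $O_i$: this follows from the triangle inequality together with the defining inequalities of the $O_i$ and the inclusions $W_j\subseteq Y\setminus W_i$ for $j\ne i$, the factor $\tfrac{1}{3}$ being exactly what closes the estimate.

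Finally I would fix up the refinement. For each $i$ choose $U_i\in\mathcal{U}$ with $W_i\subseteq Y\cap U_i$, and set $V_i=O_i\cap U_i$. Then $\{V_i:\ i\in I\}$ remains discrete (shrinking the members of a discrete family keeps it discrete), refines $\mathcal{U}$ by construction, and covers $Y$ because $W_i\subseteq V_i$ and $\bigcup_iW_i=Y$; hence $\bigcup_iV_i\in\mathcal{O}_Y$. Letting TWO answer ONE's cover $\mathcal{U}$ with $\{V_i:\ i\in I\}$ is thus a winning strategy for TWO in ${\sf G}_d^1(\mathcal{O},\mathcal{O}_Y)$. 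I expect the main obstacle to be precisely the discreteness of the swelled family: Lemma \ref{extend} would supply sets $V_i$ open in $X$ with $Y\cap V_i=W_i$ and with pairwise disjoint closures, but disjoint closures fall short of discreteness, so the metric estimate in the third step is unavoidable.
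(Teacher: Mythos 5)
Your proposal is correct, and it reaches the conclusion by a genuinely different route from the paper. The paper also begins by taking a disjoint relatively open (hence relatively clopen, hence closed-in-$X$) refinement $\{U_i\}$ of $\mathcal{U}\restriction Y$, but it then invokes the Kuratowski extension lemma (Lemma \ref{extend}) to produce open sets $V_i\subseteq X$ with $V_i\cap Y=U_i$ and pairwise disjoint closures, each sitting inside a member of $\mathcal{U}$, and declares the resulting family discrete. You instead build the open swellings explicitly, $O_i=\{x: d(x,W_i)<\tfrac13 d(x,Y\setminus W_i)\}$, and prove discreteness directly by the triangle inequality; I checked that the $\tfrac13$ does close the estimate in both cases (for $x_0\in W_{i_0}$ one gets $d(x_0,Y\setminus W_{i_0})<\tfrac43\eta<3\eta$, a contradiction; for $x_0\notin Y$ one gets $d(x,W_j)>6\eta$ and $d(x,W_j)<3\eta$ simultaneously). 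Your closing remark identifies a real issue with the shorter route: pairwise disjoint closures do not by themselves yield discreteness --- the paper's own introductory example $\{\lbrack\frac{1}{2n+1},\frac{1}{2n}\rbrack : n\in\naturals\}$ makes this point --- so the paper's final sentence, as written, needs either an additional argument that the Kuratowski extensions can be shrunk to a discrete family or precisely the kind of metric estimate you supply. Your version is therefore more self-contained (no appeal to Lemma \ref{extend}) and fills in the one step the paper leaves unjustified; the paper's version, once repaired, has the advantage of quoting a standard extension theorem rather than doing the $\epsilon$-bookkeeping by hand.
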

\begin{proof}
Let an open cover $\mathcal{U}$ of $X$ be given. Since $Y$ is an ultraparacompact space there is in the relative topology of $Y$ a disjoint family $\{U_i:i\in I\}$ of open sets that refines $\mathcal{U}$ and covers $Y$.  
Being disjoint subsets of $Y$ these relatively open sets are in fact closed in $Y$, and thus in $X$ as $Y$ is closed in $X$. By Lemma \ref{extend} we may choose for each $i$ an open subset $V_i$ of $X$ such that $V_i \cap Y = U_i = \overline{U}_i$, such that when $i\neq j$ are elements of $I$, then $\overline{V}_i\cap\overline{V}_j = U_i\cap U_j = \emptyset$, and as each $U_i$ is a subset of an element of the open cover $\mathcal{U}$ of $X$, also each $V_i$ may be taken to be an open subset of that same element of $\mathcal{U}$. But then the refinement $\{V_i:i\in I\}$ of $\mathcal{U}$ is an element of $\mathcal{O}_Y$, and is a discrete family.
\end{proof}

\begin{corollary}\label{sigmacompactzerodimensional} Let $X$ be a metric space and let $Y$ be a subset of a $\sigma$-compact zero-dimensional subset of $X$. Then TWO has a winning strategy in $\textsf{G}_d^{\omega}(\mathcal{O},\mathcal{O}_Y)$.
\end{corollary}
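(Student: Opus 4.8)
\textbf{Proof plan for Corollary \ref{sigmacompactzerodimensional}.}
The plan is to reduce the statement to a countable iteration of the single-inning Lemma \ref{twowinscompactzerodim}. First I would write the $\sigma$-compact zero-dimensional set $Z\subseteq X$ containing $Y$ as $Z=\bigcup_{n<\omega}K_n$ with each $K_n$ compact; replacing $K_n$ by $\bigcup_{m\le n}K_m$ we may assume $K_0\subseteq K_1\subseteq\cdots$. Each $K_n$, being a compact subspace of the zero-dimensional metric space $Z$, is itself zero-dimensional, and compact zero-dimensional metric spaces are ultraparacompact (indeed any finite open cover has a disjoint open refinement, and compactness trivializes the Lindel\"of-type argument). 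Moreover each $K_n$ is compact, hence closed in $X$. So each $K_n$ is a closed, ultraparacompact subspace of the metric space $X$, putting us exactly in the situation of Lemma \ref{twowinscompactzerodim}.

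Next I would have TWO play the game $\textsf{G}_d^{\omega}(\mathcal{O},\mathcal{O}_Y)$ on $X$ as follows. In inning $n$ ONE plays an open cover $\mathcal{U}_n$ of $X$. TWO views $\mathcal{U}_n$ as an open cover and applies the strategy furnished by Lemma \ref{twowinscompactzerodim} for the pair $(X,K_n)$: this yields a discrete (in $X$) family $\mathcal{V}_n$ that refines $\mathcal{U}_n$ and covers $K_n$, so in particular $\mathcal{V}_n\in\mathcal{O}_{K_n}$. TWO plays $\mathcal{V}_n$. At the end of the play $\bigcup_{n<\omega}\mathcal{V}_n$ refines each $\mathcal{U}_n$ in the required inning-by-inning sense, each $\mathcal{V}_n$ is discrete, and since $Y\subseteq Z=\bigcup_n K_n$ and each $\mathcal{V}_n$ covers $K_n$, the union $\bigcup_{n<\omega}\mathcal{V}_n$ covers $Y$; that is, it is an element of $\mathcal{O}_Y$. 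Hence TWO wins, and this prescription depends only on the moves made so far, so it is a genuine strategy.

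The only point requiring a little care — and what I expect to be the main (though modest) obstacle — is the passage from ``zero-dimensional'' to ``ultraparacompact'' for the compact pieces $K_n$. One must check that a compact zero-dimensional metric space is ultraparacompact: given an arbitrary open cover, refine it to a cover by clopen sets (using the clopen basis and regularity), extract a finite subcover $W_1,\dots,W_k$, and then disjointify by setting $W_j':=W_j\setminus\bigcup_{i<j}W_i$, which are still clopen and now pairwise disjoint and still cover. Equivalently one may simply invoke Corollary \ref{zerodim}, since a compact metric space is Lindel\"of. With that in hand the rest is bookkeeping, and I would record it in one or two sentences rather than belaboring it.
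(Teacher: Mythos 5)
Your proposal is correct and follows essentially the same route as the paper: decompose the $\sigma$-compact zero-dimensional set into countably many compact pieces, observe that each piece is a closed ultraparacompact subspace of $X$, and have TWO answer ONE's $n$-th cover with the one-inning strategy from Lemma \ref{twowinscompactzerodim} for the $n$-th piece. The paper's proof is just a terser version of this; your extra care about why a compact zero-dimensional metric space is ultraparacompact (via Corollary \ref{zerodim}, since compact implies Lindel\"of) fills in a step the paper leaves implicit.
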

\begin{proof}
Let $Y\subseteq C\subseteq X$ be given with $C$ zerodimensional and $\sigma$-compact. Write $C = \bigcup_{n<\omega}C_n$ where each $C_n$ is compact. By Lemma \ref{twowinscompactzerodim} fix for each $n$ a winning strategy $\sigma_n$ of TWO in the game $\textsf{G}^1_d(\mathcal{O},\mathcal{O}_{C_n})$. Then the strategy of responding to ONE's move in inning $n$ using the strategy $\sigma_n$ is winning for TWO in $\textsf{G}_d^{\omega}(\mathcal{O},\mathcal{O}_Y)$.
\end{proof}
The example after Theorem \ref{twowinsomega2} shows that game-length $\omega$ in Corollary \ref{sigmacompactzerodimensional} is optimal.
\begin{theorem}\label{twowinsomega}
Let $X$ be a metrizable space and let $Y$ be a subspace of $X$.
If TWO has a winning strategy in the game ${\sf G}_d^{\omega}(\mathcal{O},\mathcal{O}_Y)$ on $X$, then  $Y$ is a subset of a union of countably many closed, strongly zero-dimensional subsets of $X$.
\end{theorem}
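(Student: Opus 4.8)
The plan is to use TWO's winning strategy $\sigma$ in $\textsf{G}^{\omega}_d(\mathcal{O},\mathcal{O}_Y)$ to manufacture the required countable family, by having ONE interrogate $\sigma$ with carefully chosen covers. Fix at the outset a compatible metric $\rho\le 1$ on $X$. Two elementary facts are the backbone. First, if $\mathcal{B}$ is a \emph{discrete} family of open sets in a metric space, then $\{\overline{V}:V\in\mathcal{B}\}$ is again a discrete --- hence pairwise disjoint --- family of closed sets, so $F(\mathcal{B}):=\bigcup\{\overline{V}:V\in\mathcal{B}\}$ is closed in $X$ with each $\overline{V}$ relatively clopen in $F(\mathcal{B})$; moreover, if $\mathcal{B}$ refines a family of mesh $\le\varepsilon$ then every $\overline{V}$ has $\rho$-diameter $\le\varepsilon$. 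Second, a metrizable space is strongly zero-dimensional if and only if for every $\varepsilon>0$ it admits a partition into clopen sets of diameter $<\varepsilon$ (equivalently, a disjoint open cover of mesh $<\varepsilon$); the nontrivial implication is the standard refinement-tree argument --- nest disjoint open covers $\mathcal{W}_1\succ\mathcal{W}_2\succ\cdots$ with $\mathrm{mesh}(\mathcal{W}_k)<1/k$ and keep, for each point, the member at the first level whose piece already falls inside a member of the prescribed cover.

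With these in hand, the idea is to make ONE play a run $\mathcal{U}_0,\mathcal{U}_1,\dots$ against $\sigma$ in which each $\mathcal{U}_n$ has mesh $<2^{-n}$ and, in addition, is \emph{subordinate} to TWO's earlier responses: writing $B_i:=\sigma(\mathcal{U}_0,\dots,\mathcal{U}_i)$, we ask that every $W\in\mathcal{U}_n$ meet at most one of the closed sets $\{\overline{V}:V\in B_i\}$ for each $i<n$. Such a cover exists precisely because each $B_i$ is discrete, so $\{\overline{V}:V\in B_i\}$ is a discrete family of closed sets and every point of $X$ has a small $\rho$-ball meeting at most one member of each of the finitely many families $B_0,\dots,B_{n-1}$. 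Along such a run, $\sigma$ being winning gives $Y\subseteq\bigcup_n\bigcup B_n$; discreteness of each $B_m$ makes $\{\overline{V}\cap(\bigcap_{m'}F(B_{m'})):V\in B_m\}$ a clopen partition of $\bigcap_{m'}F(B_{m'})$ of mesh $<2^{-m+1}$; and hence, by the second fact, $\bigcap_m F(B_m)$ is a closed, strongly zero-dimensional subset of $X$. Subordination moreover forces, for $i<m$, that the traces of $B_m$ on $F(B_i)$ refine $\{\overline{V}:V\in B_i\}$, so that the pieces containing a fixed point at successive innings descend coherently.

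The serious difficulty --- and the crux of the proof --- is that a single run delivers only a \emph{union}, $Y\subseteq\bigcup_n\bigcup B_n$, whereas strong zero-dimensionality of the limit set lives on an \emph{intersection}, $\bigcap_m F(B_m)$: a point of $Y$ that TWO happens to cover by $B_3$ need not be covered by any later $B_m$, so it need not lie in the zero-dimensional limit of that run. To bridge the gap I would replace the single run by a countably branching tree of runs in which, at the node reached after inning $n$, ONE may ``reset'' and play a subordinate cover of any mesh $2^{-k}$, $k\ge n$; this tree has only countably many nodes, so the discrete families attached to its nodes, and with them the sets $\bigcap_m F(B_m)$ obtained along its branches, form a countable candidate family. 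To see that this family covers $Y$, one tracks, along a branch, the first inning $n_0$ at which a given $y\in Y$ is captured, then descends a sub-branch of ever-finer subordinate covers \emph{inside} the small clopen closed piece $\overline{V}\ni y$ produced at inning $n_0$, invoking that $\sigma$ is winning along \emph{this} sub-branch as well to recapture $y$; iterating should yield an infinite descending chain of closures through $y$ of diameters tending to $0$, hence a closed strongly zero-dimensional set containing $y$. (Equivalently, by the countable closed sum theorem, the union of the resulting family is a single strongly zero-dimensional $F_\sigma$ subset of $X$ containing $Y$.)

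I expect the main obstacle to be exactly the verification in the previous step: arranging the tree so that no point of $Y$ is ever ``abandoned'' by TWO --- i.e.\ that ONE's combined freedom to reset the fineness and to zoom into an already-captured piece genuinely forces cofinal recapture of every $y\in Y$ --- and coping with the fact that subordination is only \emph{approximate} near the boundaries $\partial(\bigcup B_n)$ of the open sets involved. Those boundaries form a countable family of nowhere dense closed sets, and the construction must absorb them by carrying them down into the next level of the recursion so that no point of $Y$ leaks out through them. Establishing that this bookkeeping can be carried out --- so that the countably many closed strongly zero-dimensional sets produced really do cover $Y$ --- is the substance of the proof; the rest, as indicated above, is routine.
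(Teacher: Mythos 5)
There is a genuine gap, and you name it yourself: the covering claim --- that the countably many closed, strongly zero-dimensional sets you produce actually contain $Y$ --- is exactly the part you defer as ``the substance of the proof,'' so what you have is a plan rather than a proof. Moreover, the plan as stated has two concrete defects. First, a cardinality problem: your candidate sets are the intersections $\bigcap_m F(B_m)$ taken \emph{along branches} of a tree in which each node has countably infinitely many children (one per admissible mesh $2^{-k}$); such a tree has countably many nodes but uncountably many branches, so the resulting family need not be countable. Second, the ``recapture'' argument does not establish what is needed: for $y$ to lie in $\bigcap_m F(B_m)$ along some branch, TWO must cover $y$ (up to closure) at \emph{every} inning of that single run, whereas the winning strategy only guarantees that $y$ is covered at \emph{some} inning of each run; passing to ever-finer sub-branches can keep postponing the inning at which $y$ is first captured, and nothing forces convergence to one branch that captures $y$ cofinally.

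The paper resolves both issues with one device you did not find. ONE's moves are drawn only from the fixed covers $\mathcal{B}_n=\{U\mbox{ open}:\ \mathrm{diam}(U)<2^{-n}\}$ --- no subordination of later covers to TWO's earlier responses is needed, so that worry, and the worry about boundaries ``leaking,'' is a red herring --- and the candidate sets are attached to \emph{nodes}, not branches: writing $\sigma$ for TWO's strategy, for each finite sequence $\tau=(n_1,\dots,n_k)$ of positive integers one sets $C_\tau=\bigcap_{m}\overline{\bigcup \sigma(\mathcal{B}_{n_1},\dots,\mathcal{B}_{n_k},\mathcal{B}_m)}$, intersecting only over the \emph{next} move. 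This family is countable; each $C_\tau$ is closed and strongly zero-dimensional for the reason you correctly identified (for every $m$ the closures of the members of TWO's response to $(\mathcal{B}_{n_1},\dots,\mathcal{B}_{n_k},\mathcal{B}_m)$ trace a discrete, relatively clopen cover of $C_\tau$ of mesh at most $2^{-m}$); and the covering claim becomes a one-line diagonalization: if some $y\in Y$ lay in no $C_\tau$, one could choose $n_1$ with $y\notin\overline{\bigcup \sigma(\mathcal{B}_{n_1})}$, then $n_2$ with $y\notin\overline{\bigcup \sigma(\mathcal{B}_{n_1},\mathcal{B}_{n_2})}$, and so on, producing a run of the game that the winning strategy loses. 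Your first two paragraphs are essentially the right preliminaries; the missing idea is this node-indexed intersection.
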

\begin{proof}
Let $F$ be a winning strategy for TWO in the game ${\sf G}^{\omega}_d(\mathcal{O},\mathcal{O}_Y)$. 
Let $d$ be a compatible metric for the topology of $X$, and for each positive integer $n$ let $\mathcal{B}_n$ be the set
\[
\{U\subset X: U\mbox{ open and }{\sf diam}_d(U)<\frac{1}{2^n} \}.
\]
Define $C_{\emptyset}:= \bigcap\{\overline{\bigcup F(\mathcal{B}_n)}: 0<n<\omega\}$. And for each sequence $(n_1,\cdots,n_k)$ of positive integers, define
           $C_{n_1,\cdots,n_k} := \bigcap\{\overline{\bigcup F(\mathcal{B}_{n_1},\cdots,\mathcal{B}_{n_k}, \mathcal{B}_m)}: 0< m<\omega\}$.
           
We claim:
\begin{itemize}
\item[(a)]{Each $C_{n_1,\cdots,n_k}$, as well as $C_{\emptyset}$, is a closed, strongly zerodimensional set.} 
\item[(b)]{$Y \subseteq \bigcup\{C_{\tau}: \, \tau\in^{<\omega}\omega\}$.}
\end{itemize}           
Towards proving (a): 
 Let $A$ and $B$ be disjoint nonempty closed subsets of $C = C_{n_1,\cdots,n_k}$. As $C$ is closed in $X$, so are $A$ and $B$. Since $A$ and $B$ are disjoint, fix $\epsilon>0$ so that for any $x\in A$ and any $y\in B$ we have $d(x,y)>2\epsilon$. Then fix $m$ large enough that $\frac{1}{2^m}<\epsilon$. Then the discrete (in $X$) family $\{C\cap \overline{U}: U\in F(\mathcal{B}_{n_1},\cdots,\mathcal{B}_{n_k},\mathcal{B}_m)\}$ is an open (in the relative topology of $C$) cover of $C$. Moreover, the family 
 $\mathcal{U} =  \{C\cap \overline{U}: U\cap A\neq \emptyset \mbox{ and } U \in F(\mathcal{B}_{n_1},\cdots,\mathcal{B}_{n_k},\mathcal{B}_m)\}$
 is a discrete family of clopen sets in $C$, whence $U =\bigcup\mathcal{U}$ is clopen in $C$. $U$ contains $A$ and by the choice of $\epsilon$ and $m$ is disjoint from $B$.

Towards proving (b), suppose that on the contrary $x\in Y\setminus (\bigcup\{C_{\tau}: \, \tau\in^{<\omega}\omega\})$. As $x$ is not an element of $C_{\emptyset}$, choose an $n_1$ such that $x$ is not in $\overline{\bigcup F(\mathcal{B}_{n_1})}$. Then as $x$ is not an element of $C_{n_1}$, choose an $n_2$ such that $x$ is not in $\overline{\bigcup F(\mathcal{B}_{n_1}, \mathcal{B}_{n_2})}$, and so on. In this way we obtain an $F$-play of the game ${\sf G}^{\omega}_d(\mathcal{O},\mathcal{O}_Y)$ in which TWO lost since TWO did not cover $x\in Y$. This contradicts the hypothesis that $F$ is a winning strategy for TWO.
\end{proof}

\begin{corollary}\label{metrizable}
If $X$ is a metrizable space, then the following are equivalent:
\begin{enumerate}
\item{TWO has a winning strategy in $\textsf{G}_d^{\omega}(\mathcal{O},\mathcal{O})$.}
\item{$X$ is ultraparacompact.}
\item{TWO has a winning strategy in $\textsf{G}^1_d(\mathcal{O},\mathcal{O})$.}
\end{enumerate}
\end{corollary}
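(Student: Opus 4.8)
The strategy is to establish a cycle of implications among the three statements, leaning on the results already proved in this section. The implication $(3)\Rightarrow(1)$ is immediate from the monotonicity in game length recorded in Section 1: a winning strategy for TWO in $\textsf{G}^1_d(\mathcal{O},\mathcal{O})$ yields one in $\textsf{G}^{\beta}_d(\mathcal{O},\mathcal{O})$ for every $\beta\geq 1$, in particular for $\beta=\omega$. The equivalence $(2)\Leftrightarrow(3)$ is precisely Lemma \ref{ultrapcpt}, which holds for arbitrary topological spaces and hence in particular for metrizable $X$. So the only real content is $(1)\Rightarrow(2)$, and the plan is to deduce it from Theorem \ref{twowinsomega} applied with $Y=X$.

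Concretely, assume TWO has a winning strategy in $\textsf{G}^{\omega}_d(\mathcal{O},\mathcal{O})=\textsf{G}^{\omega}_d(\mathcal{O},\mathcal{O}_X)$ on the metrizable space $X$. By Theorem \ref{twowinsomega}, $X$ is the union of countably many closed, strongly zero-dimensional subsets $C_k$ of $X$. Each $C_k$, being a closed subspace of a metrizable space, is itself metrizable, and being strongly zero-dimensional (covering dimension zero) and metrizable it is zero-dimensional in Sierpi\'nski's sense; more to the point, a metrizable space of covering dimension zero is ultraparacompact, since every open cover of a metric space has a $\sigma$-discrete open refinement and strong zero-dimensionality lets one disjointify it. Then I invoke the standard fact, already used in the excerpt for Balogh's space, that a space which is a countable union of closed ultraparacompact (indeed, just closed and ``$\textsf{G}^1_d$-winning-for-TWO'') subspaces has TWO winning $\textsf{G}^{\omega}_d(\mathcal{O},\mathcal{O})$ — but here I want the conclusion that $X$ is \emph{ultraparacompact}, so instead I appeal directly to the classical theorem that a metrizable space which is a countable union of closed strongly zero-dimensional subspaces is itself strongly zero-dimensional (the countable sum theorem for $\dim$), hence, being metrizable of covering dimension zero, ultraparacompact. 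This gives $(2)$, closing the cycle $(1)\Rightarrow(2)\Leftrightarrow(3)\Rightarrow(1)$.

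The main obstacle is locating the cleanest route from ``countable union of closed strongly zero-dimensional metrizable subspaces'' to ``ultraparacompact.'' There are two acceptable paths: either cite the countable closed sum theorem $\dim(\bigcup_k C_k)=0$ for metrizable spaces and then cite that metrizable $+\dim 0\Rightarrow$ ultraparacompact, or bypass $\dim$ of the whole space and argue directly that a countable union of closed ultraparacompact subspaces of a metric space is ultraparacompact (refine an open cover on $C_1$ by a disjoint relatively-clopen family, extend using the closedness of $C_1$, subtract $C_1$ and repeat on $C_2\setminus(\text{covered part})$, which is again closed in the remaining space). I expect the first path to be shortest to write, citing \cite{Engelking}, while the second is more self-contained and mirrors the Balogh-space remark already in the text. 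Either way the step is routine given the literature; the conceptual work has all been done in Theorem \ref{twowinsomega} and Lemma \ref{ultrapcpt}.
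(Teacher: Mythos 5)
Your proposal is correct and follows essentially the same route as the paper: apply Theorem \ref{twowinsomega} with $Y=X$, invoke the countable closed sum theorem to conclude $X$ is strongly zero-dimensional, use metrizability to pass to covering dimension zero and hence ultraparacompactness, and close the cycle via the equivalence of ultraparacompactness with TWO winning the length-one game and the monotonicity in game length. The only cosmetic difference is that you cite Lemma \ref{ultrapcpt} for $(2)\Leftrightarrow(3)$ where the paper cites Lemma \ref{twowinscompactzerodim} for $(2)\Rightarrow(3)$; both are available and the substance is identical.
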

\begin{proof}
{\flushleft{\underline{$(1) \Rightarrow (2)$: }}} By Theorem \ref{twowinsomega}, $X$ is a union of countably many closed sets, each strongly zerodimensional. By the countable sum theorem - see \cite{Nagata} Theorem II.2 A) - $X$ is strongly zerodimensional. As $X$ is metrizable the Katetov-Morita Theorem - see Theorem II.7 of \cite{Nagata} - $X$ has covering dimension zero. Thus, by Proposition 3.2.2 of \cite{Engelking}, $X$ is ultraparacompact.
{\flushleft{\underline{$(2) \Rightarrow (3)$: }}} This implication is Lemma \ref{twowinscompactzerodim} since $X$ is metrizable.
{\flushleft{\underline{$(3) \Rightarrow (1)$: }}} This is left to the reader.
\end{proof}

\begin{corollary}\label{reals} Let $Y$ be a subspace of the real line ${\mathbb R}$. Then the following are equivalent:
\begin{enumerate}
\item{TWO has a winning strategy in $\textsf{G}^{\omega}_d(\mathcal{O},\mathcal{O}_Y)$.}
\item{$Y$ is a first category set of real numbers.}
\item{TWO has a winning strategy in the game $\textsf{MB}(Y,{\mathbb R})$.}
\end{enumerate}
\end{corollary}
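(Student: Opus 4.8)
The plan is to establish $(1)\Leftrightarrow(2)$ directly from the structural results already proved --- Theorem~\ref{twowinsomega} for the forward direction and Corollary~\ref{sigmacompactzerodimensional} for the reverse --- and to obtain $(2)\Leftrightarrow(3)$ for free from the classical Banach--Mazur theorem on meager sets. The one elementary fact I will use repeatedly is that a subspace $A$ of the real line is zero-dimensional (in Sierpinski's clopen-basis sense) if and only if it has empty interior in $\mathbb{R}$: if $A$ has nonempty interior it contains a nondegenerate interval, which is a nondegenerate connected subspace, so $A$ is not totally disconnected and has no clopen basis; conversely, if $A$ has empty interior, then for $x\in A$ and $\varepsilon>0$ neither $(x-\varepsilon,x)$ nor $(x,x+\varepsilon)$ lies in $A$, so choosing $r\in(x-\varepsilon,x)\setminus A$ and $s\in(x,x+\varepsilon)\setminus A$ makes $(r,s)\cap A$ a clopen neighborhood of $x$ in $A$ of diameter less than $2\varepsilon$. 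Since zero-dimensionality and strong zero-dimensionality coincide for separable metric spaces, the same characterization applies to the strongly zero-dimensional subsets of $\mathbb{R}$.

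For $(1)\Rightarrow(2)$ I would argue as follows. Suppose TWO has a winning strategy in $\textsf{G}^{\omega}_d(\mathcal{O},\mathcal{O}_Y)$ on $\mathbb{R}$. Since $\mathbb{R}$ is metrizable, Theorem~\ref{twowinsomega} provides closed, strongly zero-dimensional sets $C_n\subseteq\mathbb{R}$ $(n<\omega)$ with $Y\subseteq\bigcup_{n<\omega}C_n$. By the fact above each $C_n$ has empty interior, and being closed it is therefore nowhere dense in $\mathbb{R}$. Hence $Y$ is contained in a countable union of nowhere dense sets, i.e. $Y$ is of first category, which is $(2)$.

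For $(2)\Rightarrow(1)$: suppose $Y\subseteq\bigcup_{n<\omega}N_n$ with each $N_n$ nowhere dense, and put $C:=\bigcup_{n<\omega}\overline{N_n}$. Then $C$ is an $F_{\sigma}$ subset of $\mathbb{R}$, hence $\sigma$-compact; and $C$, being a countable union of nowhere dense sets, has empty interior, hence is zero-dimensional by the fact above. Since $Y\subseteq C\subseteq\mathbb{R}$ with $C$ a $\sigma$-compact zero-dimensional subset of $\mathbb{R}$, Corollary~\ref{sigmacompactzerodimensional} produces a winning strategy for TWO in $\textsf{G}^{\omega}_d(\mathcal{O},\mathcal{O}_Y)$, which is $(1)$.

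Finally, $(2)\Leftrightarrow(3)$ is the classical Banach--Mazur theorem on meager sets (see \cite{Oxtoby}, also \cite{GT}): in the win condition used here for $\textsf{MB}(Y,X)$ --- where TWO wins precisely when $\bigcap_n O_n$ misses $Y$ --- TWO has a winning strategy if and only if $Y$ is of first category in $X$. I do not expect a serious obstacle anywhere in this argument; the only points requiring care are matching the paper's win convention for $\textsf{MB}(Y,\mathbb{R})$ to the standard statement of the Banach--Mazur theorem (so that TWO's winning strategy corresponds to meagerness, not comeagerness, of $Y$), and verifying that the notion of zero-dimensionality invoked by Corollary~\ref{sigmacompactzerodimensional} is exactly the property enjoyed by empty-interior subsets of $\mathbb{R}$, which is immediate from the description of connected subsets of $\mathbb{R}$ as intervals.
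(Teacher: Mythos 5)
Your proof is correct and follows the same overall skeleton as the paper's: $(1)\Rightarrow(2)$ via Theorem \ref{twowinsomega} plus the observation that closed strongly zero-dimensional subsets of $\reals$ are nowhere dense, $(2)\Rightarrow(1)$ via Corollary \ref{sigmacompactzerodimensional}, and $(2)\Leftrightarrow(3)$ by citing Oxtoby. The one genuine (if small) divergence is in $(2)\Rightarrow(1)$: the paper passes from the countably many closed nowhere dense (hence compact-by-pieces, zero-dimensional) sets covering $Y$ to a \emph{single} $\sigma$-compact zero-dimensional superset by invoking the Hurewicz--Tumarkin countable sum theorem, whereas you take $C=\bigcup_n\overline{N_n}$ and argue directly that $C$ is zero-dimensional because it has empty interior in $\reals$. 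That shortcut is legitimate, but note that the step ``a countable union of nowhere dense sets has empty interior'' is exactly the Baire category theorem for $\reals$; you should name it, since for a general space a first category set need not have empty interior. So in effect you have traded the dimension-theoretic sum theorem for Baire plus the elementary characterization of zero-dimensional subsets of the line as those containing no nondegenerate interval --- arguably more self-contained for this one-dimensional setting, though less indicative of how one would argue in $\reals^n$. Your explicit care about matching the win convention of $\textsf{MB}(Y,\reals)$ (TWO wins when $\bigcap_n O_n$ misses $Y$) to Oxtoby's Theorem 1 is a point the paper leaves implicit and is worth having.
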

\begin{proof}
{\flushleft{\underline{(1)$\Rightarrow$(2): }}} Observe that a closed, zerodimensional set of real numbers is nowhere dense. Apply Theorem \ref{twowinsomega}.

{\flushleft{\underline{(2)$\Rightarrow$(1): }}} As $Y$ is a first category set of real numbers it is a subset of a union of countably many closed, nowhere dense sets.  ${\mathbb R}$ is $\sigma$-compact, whence $Y$ is a subset of a union of countably many compact zerodimensional subsets of ${\mathbb{R}}$. By the Hurewicz-Tumarkin Theorem $Y$ is a subset of a $\sigma$-compact zero-dimensional subset of ${\mathbb R}$. Apply Corollary \ref{sigmacompactzerodimensional}.  

{\flushleft{\underline{(2)$\Leftrightarrow$(3): }}} This is a direct application of Theorem 1 of \cite{Oxtoby}.
\end{proof}

In \cite{Kulesza} Kulesza constructs a complete, zerodimensional metric space $K$ that is not ultraparacompact. Indeed, $K$  has covering dimension 1. On p. 111 of \cite{Kulesza} $K$ is represented as $K = P_1 \cup \bigcup_{m\in\naturals}P^m_2$ where the subspace $P_1$ is homeomorphic to $D(\aleph_1)^{\omega}$ and each $P^m_2$ is, by \cite{Kulesza} Lemmas 3.3 and 3.4 and the remarks on \cite{Kulesza}, p. 113, a strongly zerodimensional closed (and nowhere dense) subset of the space $K$.

\begin{corollary}\label{Kuleszaspace} On the space $K$
TWO does not have a winning strategy in $\textsf{G}^{\omega}_d(\mathcal{O},\mathcal{O}_{P_1})$.
\end{corollary}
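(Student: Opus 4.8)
The plan is to argue by contradiction, using Theorem \ref{twowinsomega} as the main engine. Suppose, toward a contradiction, that TWO has a winning strategy in the game $\textsf{G}^{\omega}_d(\mathcal{O},\mathcal{O}_{P_1})$ on $K$. Since $K$ is a metric space and $P_1$ is a subspace of it, Theorem \ref{twowinsomega} applies and produces a countable family $\{Z_n:n<\omega\}$ of sets, each closed in $K$ and strongly zero-dimensional, with $P_1 \subseteq \bigcup_{n<\omega}Z_n$.

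Next I would combine this inclusion with the decomposition of Kulesza's space recalled above, namely $K = P_1 \cup \bigcup_{m\in\naturals}P^m_2$ in which each $P^m_2$ is, by \cite{Kulesza}, a closed, strongly zero-dimensional subset of $K$. Since each $Z_n$ and each $P^m_2$ is a subset of $K$, we obtain
\[
K \;=\; P_1 \cup \bigcup_{m\in\naturals}P^m_2 \;\subseteq\; \Big(\bigcup_{n<\omega}Z_n\Big)\cup\Big(\bigcup_{m\in\naturals}P^m_2\Big) \;\subseteq\; K ,
\]
so that $K$ is the union of the countable family $\{Z_n:n<\omega\}\cup\{P^m_2:m\in\naturals\}$, every member of which is closed in $K$ and strongly zero-dimensional.

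I would then finish by invoking the countable closed sum theorem for covering dimension --- the same result cited in the proof of Corollary \ref{metrizable} (\cite{Nagata}, Theorem II.2 A)), which is valid for arbitrary, not necessarily separable, metric spaces --- to conclude that $K$ itself is strongly zero-dimensional, i.e.\ has covering dimension $0$. But \cite{Kulesza} shows that $K$ has covering dimension $1$. This contradiction shows that TWO has no winning strategy in $\textsf{G}^{\omega}_d(\mathcal{O},\mathcal{O}_{P_1})$ on $K$.

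There is no genuine obstacle in this argument; the only points requiring care are the bookkeeping with the decomposition of $K$ (making sure the pieces $P^m_2$ are exactly the closed, strongly zero-dimensional sets supplied by \cite{Kulesza}) and the observation that the countable sum theorem for covering dimension is still available here even though $K$ is non-separable. For variety one could instead argue positively: each $P^m_2$, being a closed strongly zero-dimensional --- hence, via the Kat\v{e}tov--Morita theorem and Proposition 3.2.2 of \cite{Engelking}, ultraparacompact --- subspace of the metric space $K$, carries a winning strategy for TWO in $\textsf{G}^1_d(\mathcal{O},\mathcal{O}_{P^m_2})$ by Lemma \ref{twowinscompactzerodim}; dovetailing these strategies with a putative winning strategy for TWO in $\textsf{G}^{\omega}_d(\mathcal{O},\mathcal{O}_{P_1})$ along a partition of $\omega$ into infinitely many infinite blocks would yield a winning strategy for TWO in $\textsf{G}^{\omega}_d(\mathcal{O},\mathcal{O})$ on $K$, whence $K$ would be ultraparacompact by Corollary \ref{metrizable}, again contradicting \cite{Kulesza}. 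I would present the first route, as it is the most direct use of Theorem \ref{twowinsomega}.
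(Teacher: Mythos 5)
Your argument is correct and is essentially the paper's own proof: both suppose TWO has a winning strategy, apply Theorem \ref{twowinsomega} to cover $P_1$ by countably many closed strongly zero-dimensional sets, adjoin the sets $P^m_2$ to express $K$ as such a countable union, and invoke the countable sum theorem to contradict $\dim K = 1$. The only cosmetic difference is that you cite the sum theorem via Nagata while the paper cites Theorem 4.1.9 of \cite{Engelking}.
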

\begin{proof}
Suppose that, on the contrary, TWO has a winning strategy. By Theorem \ref{twowinsomega} $P_1$ is contained in a union of countably many closed, strongly zerodimensional subsets of $K$. But also each of the subspaces $P^m_2$ is a closed, strongly zerodimensional subset of $K$. Thus, $K$ is the union of countable many closed, strongly zerodimensional subsets. By Theorem 4.1.9 in \cite{Engelking} $K$ has covering dimension 0, contradicting the fact that $K$ has covering dimension larger than $0$.
\end{proof}

Incidentally, note that the argument in the proof of Theorem \ref{twowinsomega} also
gives:
\begin{theorem}\label{twowinsomega2}
Let $X$ be a  metric space and let $Y$ be a subspace of $X$.
If TWO has a winning strategy in ${\sf G}^{1}_d(\mathcal{O},\mathcal{O}_Y)$, then $Y$ is a subset of a closed, strongly zerodimensional subset of $X$.
\end{theorem}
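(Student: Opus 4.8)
The plan is to mimic the construction in the proof of Theorem \ref{twowinsomega}, but now only a single inning is available, so there is no room to iterate over coordinates $n_1,\cdots,n_k$. Let $F$ be a winning strategy for TWO in ${\sf G}^1_d(\mathcal{O},\mathcal{O}_Y)$ and let $d$ be a compatible metric on $X$. As before, for each positive integer $n$ let $\mathcal{B}_n$ be the open cover of $X$ by all open sets of $d$-diameter less than $\frac{1}{2^n}$. Each $\mathcal{B}_n$ is a legal first move for ONE, so $F(\mathcal{B}_n)$ is a discrete family refining $\mathcal{B}_n$ and, since the play ends after TWO's move, $F(\mathcal{B}_n)$ must already be an element of $\mathcal{O}_Y$, i.e.\ it covers $Y$. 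The natural candidate for the ``single'' closed strongly zero-dimensional set is
\[
  C := \bigcap\{\overline{\textstyle\bigcup F(\mathcal{B}_n)}: 0<n<\omega\}.
\]
This is closed in $X$ by construction.

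The two things to verify are: (a) $C$ is strongly zero-dimensional, and (b) $Y\subseteq C$. For (a) I would argue exactly as in the proof of (a) in Theorem \ref{twowinsomega}: given disjoint nonempty closed subsets $A,B$ of $C$, pick $\epsilon>0$ with $d(x,y)>2\epsilon$ for all $x\in A$, $y\in B$, then pick $m$ with $\frac{1}{2^m}<\epsilon$; the family $\{C\cap\overline{U}: U\in F(\mathcal{B}_m)\}$ is a discrete family of sets clopen in $C$ (discreteness of $F(\mathcal{B}_m)$ passes to closures, and these closures have diameter at most $\frac{1}{2^m}$, so their traces on $C$ are disjoint and each is clopen in $C$), and the union of those traces $\overline{U}\cap C$ with $U\cap A\neq\emptyset$ is a clopen subset of $C$ containing $A$ and, by the diameter bound, disjoint from $B$. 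This exhibits the separation witnessing covering dimension zero, hence strong zero-dimensionality.

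For (b): if some $x\in Y$ were not in $C$, then $x\notin\overline{\bigcup F(\mathcal{B}_n)}$ for some $n$; but then the single-inning play in which ONE plays $\mathcal{B}_n$ and TWO responds with $F(\mathcal{B}_n)$ is a complete $F$-play in which $\bigcup F(\mathcal{B}_n)$ fails to cover the point $x\in Y$, so TWO loses — contradicting that $F$ is winning. Hence $Y\subseteq C$, and $C$ is the required set.

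The only real subtlety — and it is mild — is making sure the diameter bound genuinely forces the traces $\overline{U}\cap C$ to be \emph{disjoint} whenever they meet $A$ and $B$ respectively, which is why one separates $A$ and $B$ by $2\epsilon$ rather than $\epsilon$: a set $U\in F(\mathcal{B}_m)$ of diameter $<\frac{1}{2^m}<\epsilon$ that meets $A$ cannot also meet $B$, so the ``$A$-part'' and the ``$B$-part'' of the cover are genuinely separated. Everything else is a direct specialization of the $\omega$-length argument to $k=0$, using that a length-one play is decided as soon as TWO moves, so TWO's single discrete refinement is forced to already lie in $\mathcal{O}_Y$. $\epf$
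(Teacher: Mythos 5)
Your argument is correct and is essentially the paper's own proof: the paper simply observes that in the proof of Theorem \ref{twowinsomega} one already has $Y\subseteq C_{\emptyset}$ when the game has length one, and $C_{\emptyset}$ was shown there to be closed and strongly zero-dimensional. Your proposal reproduces exactly that construction (your $C$ is $C_{\emptyset}$) with the details of claims (a) and (b) specialized to $k=0$, so there is nothing to add.
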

\begin{proof}
In the argument in the proof of Theorem \ref{twowinsomega} we see that $Y\subseteq C_{\emptyset}$.
\end{proof}

Thus, for example, TWO has a winning strategy in the game $\textsf{G}^{\omega}_d(\mathcal{O},\mathcal{O}_{\mathbb Q})$, but does not have a winning strategy in the game $\textsf{G}^{1}_d(\mathcal{O},\mathcal{O}_{\mathbb Q})$.

\section{Longer games}

For any space $(X,\tau)$ there is an ordinal $\alpha \le \vert X\vert$ such that TWO has a winning strategy in the game ${\sf G}^{\alpha}_d(\mathcal{O},\mathcal{O})$ on $X$. Thus, we may define for the space
\[
   \textsf{tp}_d(X,\tau) = \min\{\alpha>0: \mbox{ TWO has a winning strategy in the game }\textsf{G}^{\alpha}_d(\mathcal{O},\mathcal{O})\}.
\]
Since every separable metric space is a union of at most $\aleph_1$ zerodimensional subsets we find that for each separable metrizable space $(X,\tau)$, ${\sf tp}_d(X,\tau)\le \omega_1$.

Let $\alpha$ be an infinite ordinal with Cantor normal form $\alpha = \omega^{\beta_1}\cdot n_1 + \cdots + \omega^{\beta_m}\cdot n_m + n_{m+1}$ where $\beta_1 > \cdots > \beta_m > 0$ and $n_{i}<\omega$ for each $i\le n+1$. Define $\alpha^{-}$ as follows:

\[
   \alpha^{-} = \left\{\begin{tabular}{ll}
                                 $\alpha$ & if $n_{m+1}=0$ and $\beta_m>1$\\
                                 $\omega^{\beta_1}\cdot n_1 + \cdots + \omega^{\beta_m}\cdot (n_m-1)+1 $ & if $n_{m+1}=0$ and $\beta_m=1$\\
$\omega^{\beta_1}\cdot n_1 + \cdots + \omega^{\beta_m}\cdot n_m +1$ & otherwise
                                 \end{tabular}
                       \right.
\]

\begin{corollary}\label{successorordinals} Let $X$ be a metrizable space and let $\alpha$ be an infinite countable ordinal. If TWO has a winning strategy in $\textsf{G}^{\alpha}_d(\mathcal{O},\mathcal{O})$ on $X$ then TWO has a winning strategy in $\textsf{G}^{\alpha^{-}}_d(\mathcal{O},\mathcal{O})$ on $X$.
\end{corollary}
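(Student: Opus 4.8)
The plan is to reduce the corollary to the single claim: \emph{if $X$ is metrizable and TWO has a winning strategy in $\textsf{G}^{\delta+\omega}_d(\mathcal{O},\mathcal{O})$ on $X$, then TWO has a winning strategy in $\textsf{G}^{\delta+1}_d(\mathcal{O},\mathcal{O})$ on $X$.} First I would check that this claim suffices, by reading off the appropriate $\delta$ from the Cantor normal form of $\alpha$. If $n_{m+1}=0$ and $\beta_m>1$, then $\alpha^{-}=\alpha$ and there is nothing to prove. If $n_{m+1}=0$ and $\beta_m=1$, then $\alpha=\delta+\omega$ and $\alpha^{-}=\delta+1$ for $\delta=\omega^{\beta_1}\cdot n_1+\cdots+\omega^{\beta_m}\cdot(n_m-1)$ (use $\omega\cdot(n_m-1)+\omega=\omega\cdot n_m$), so the claim applies directly. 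If $n_{m+1}>0$, then $\alpha=\delta+n_{m+1}$ and $\alpha^{-}=\delta+1$ for $\delta=\omega^{\beta_1}\cdot n_1+\cdots+\omega^{\beta_m}\cdot n_m$; since $\delta+n_{m+1}\le\delta+\omega$, the monotonicity properties listed after the definition of the game give TWO a winning strategy in $\textsf{G}^{\delta+\omega}_d(\mathcal{O},\mathcal{O})$, and again the claim applies.

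To prove the claim, fix a winning strategy $F$ for TWO in $\textsf{G}^{\delta+\omega}_d(\mathcal{O},\mathcal{O})$ and define a strategy $G$ for TWO in $\textsf{G}^{\delta+1}_d(\mathcal{O},\mathcal{O})$ as follows: in each inning $\gamma<\delta$, $G$ responds exactly as $F$ does; then, writing $p$ for the resulting partial play and letting $Y$ be the subset of $X$ not covered by TWO's moves during $p$, $G$ responds in the final inning with the move prescribed by a winning strategy for TWO in the one-inning game $\textsf{G}^{1}_d(\mathcal{O},\mathcal{O}_Y)$ on $X$ (the empty family if $Y=\emptyset$). Granting that such a winning strategy for $\textsf{G}^{1}_d(\mathcal{O},\mathcal{O}_Y)$ exists, $G$ wins every play: the moves made during $p$ cover $X\setminus Y$ and the final move covers $Y$, so the union of all of TWO's moves is an open cover of $X$. (That $G$'s last move depends on the earlier history through $Y$ is legitimate for a strategy.)

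The substance of the argument is to produce, for each such $p$, a winning strategy for TWO in $\textsf{G}^{1}_d(\mathcal{O},\mathcal{O}_Y)$, and I would do this in three steps. \emph{(i) $Y$ is closed in $X$.} Since $F$ is winning, every extension of $p$ to a full play of length $\delta+\omega$ is won by TWO, so in each such play all of TWO's moves---in particular those made during $p$---consist of open sets; hence the set covered during $p$ is open and $Y$ is closed. \emph{(ii) $Y$ is contained in a union of countably many closed, strongly zero-dimensional subsets of $X$.} The behaviour of $F$ on the innings after $p$ is a winning strategy for TWO in $\textsf{G}^{\omega}_d(\mathcal{O},\mathcal{O}_Y)$ on $X$: any length-$\omega$ continuation governed by it, appended to $p$, is a full $F$-play and hence won by TWO, while the moves of $p$ already cover $X\setminus Y$, so the appended $\omega$ moves must cover $Y$. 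Theorem \ref{twowinsomega} then gives $Y\subseteq\bigcup_{n<\omega}D_n$ with each $D_n$ closed and strongly zero-dimensional in $X$. \emph{(iii) $Y$ is ultraparacompact.} The sets $D_n\cap Y$ are closed in $Y$, are still strongly zero-dimensional (subspaces of the metrizable, strongly zero-dimensional $D_n$), and cover $Y$; running the same chain of implications used in the proof of Corollary \ref{metrizable} (the countable sum theorem, then the Katetov--Morita theorem, then Proposition 3.2.2 of \cite{Engelking}), $Y$ is ultraparacompact. Since $Y$ is then a closed, ultraparacompact subspace of the metric space $X$, Lemma \ref{twowinscompactzerodim} yields the required winning strategy for TWO in $\textsf{G}^{1}_d(\mathcal{O},\mathcal{O}_Y)$.

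The step I expect to be the crux---and the one easiest to overlook---is \emph{(i)}, the closedness of $Y$. It is precisely what upgrades the conclusion of Theorem \ref{twowinsomega} from ``$Y$ lies inside a countable union of closed strongly zero-dimensional sets'' to ``$Y$ is \emph{itself} a countable union of subsets of $Y$ that are closed in $Y$ and strongly zero-dimensional'', which is what the countable sum theorem and Lemma \ref{twowinscompactzerodim} need; without it a single inning need not suffice to cover $Y$, as the example $Y=\mathbb{Q}$ recorded after Theorem \ref{twowinsomega2} shows. The remaining points are routine: matching $\delta$ to the Cantor normal form of $\alpha$, and checking that the ``tail of $F$'' really is a legitimate strategy for a game of length $\omega$.
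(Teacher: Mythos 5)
Your proof is correct and follows essentially the same route as the paper's: play the given strategy for the first $\delta$ innings of the Cantor normal form, observe that the uncovered remainder is closed, use the length-$\omega$ tail together with Theorem \ref{twowinsomega} and the sum-theorem/Katetov--Morita chain to conclude that remainder is ultraparacompact, and finish in one inning via Lemma \ref{twowinscompactzerodim}. Your treatment is in fact slightly more careful than the paper's at two points it leaves implicit --- the closedness of the uncovered set $Y$ and the precise appeal to Theorem \ref{twowinsomega} rather than Theorem \ref{twowinsomega2} --- but the decomposition and the key lemmas are the same.
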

\begin{proof}
For consider a winning strategy $\sigma$ of TWO. We need only consider ordinals $\alpha$ for which $\alpha>\alpha^{-}$.
{\flushleft{\underline{Case 1: $n_{m+1} = 0$.}  }} We may assume that $\beta_m = 1$. After $\omega^{\beta_1}\cdot n_1 + \cdots + \omega^{\beta_m}\cdot (n_m-1)$ innings TWO has covered a part, $U$, of the space $X$, and a closed set $C = X\setminus U$ remains to be covered. Using $\sigma$ TWO has a winning strategy in the game $\textsf{G}^{\omega}_d(\mathcal{O},\mathcal{O})$ on $C$. Now Theorem \ref{twowinsomega2} implies that the closed set $C$ is strongly zero-dimensional. Since $X$ is metrizable, $C$ is ultraparacompact. Thus, TWO has a winning strategy that wins $\textsf{G}^{\alpha^{-}}_d(\mathcal{O},\mathcal{O})$ on $X$.

{\flushleft{\underline{Case 2: $n_{m+1} > 0$.}  }} We may assume that $n_{m+1}>1$. After $\omega^{\beta_1}\cdot n_1 + \cdots + \omega^{\beta_m}\cdot n_m$ innings TWO has covered a part, $U$, of the space $X$, and a closed set $C = X\setminus U$ remains to be covered. Using $\sigma$ TWO has a winning strategy in the game $\textsf{G}^{n_{m+1}}_d(\mathcal{O},\mathcal{O})$ on $C$. Now Theorem \ref{twowinsomega2} implies that the closed set $C$ is strongly zero-dimensional. As $X$ is  metrizable, $C$ is ultraparacompact. Thus, TWO has a winning strategy that wins $\textsf{G}^{\alpha^{-}}_d(\mathcal{O},\mathcal{O})$ on $X$.
\end{proof}

Since the unit interval is a Peano space, Corollary \ref{Peanolike} implies that ONE has a winning strategy in the game ${\sf G}^{\omega}_d(\open,\open)$. We show that TWO has a winning strategy in ${\sf G}_d^{\omega+1}(\open,\open)$ on the unit interval. The key to the argument is Lebesgue's covering lemma:
\begin{theorem}[Lebesgue]\label{lebesgue} If $(X,d)$ is a compact metric space then there is for each open cover $\mathcal{U}$ of $X$ a positive real number $\delta$ such that for each set $Y\subset X$ for which the $d$-diameter is less than $\delta$ there is a set $U\in\mathcal{U}$ such that $Y\subseteq U$.
\end{theorem}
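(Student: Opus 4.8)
The plan is to give the standard proof of the Lebesgue number lemma, reducing the problem to a finite subcover and then extracting a uniform positive lower bound from a continuous function on the compact space $X$. First I would invoke compactness to replace the given open cover $\mathcal{U}$ by a finite subcover $U_1,\ldots,U_k$; any $\delta$ witnessing the conclusion for this finite subcover also witnesses it for $\mathcal{U}$, since a member of $\mathcal{U}$ containing $Y$ is already found among the $U_i$. The degenerate case in which some $U_i$ equals $X$ is trivial, as then every $Y\subseteq X = U_i$ and any $\delta>0$ works, so I would assume each $X\setminus U_i$ is nonempty.

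Next I would define, for $x\in X$, the function
\[
  f(x) = \max_{1\le i\le k} d(x,X\setminus U_i),
\]
where $d(x,A)=\inf\{d(x,a):a\in A\}$ is the usual point-to-set distance. Each map $x\mapsto d(x,X\setminus U_i)$ is ($1$-Lipschitz, hence) continuous, so $f$ is continuous as a finite maximum of continuous functions. The key positivity observation is that $f(x)>0$ for every $x$: since $U_1,\ldots,U_k$ cover $X$, some $U_i$ contains $x$, and because $U_i$ is open, $x$ has positive distance to the closed complement $X\setminus U_i$, forcing $f(x)\ge d(x,X\setminus U_i)>0$. As $f$ is a continuous positive function on the compact space $X$, it attains a minimum value $\delta := \min_{x\in X} f(x)$, and this $\delta$ is strictly positive.

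Finally I would verify that this $\delta$ works. Let $Y\subseteq X$ satisfy ${\sf diam}_d(Y)<\delta$; I may assume $Y\neq\emptyset$. Fix any point $x_0\in Y$. Then $f(x_0)\ge\delta$, so there is an index $i$ with $d(x_0,X\setminus U_i)\ge\delta$, which says precisely that the open ball $B(x_0,\delta)$ is contained in $U_i$. Since every $y\in Y$ satisfies $d(x_0,y)\le {\sf diam}_d(Y)<\delta$, we conclude $Y\subseteq B(x_0,\delta)\subseteq U_i$, as required.

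There is no substantial obstacle here; the only points demanding care are the bookkeeping for the edge case where a subcover member is all of $X$, and the verification that $f$ is continuous and strictly positive, both of which are routine. As an alternative I could argue by contradiction using sequential compactness: if no $\delta$ existed, then for each $n$ I would choose a set $Y_n$ with ${\sf diam}_d(Y_n)<\frac{1}{n}$ lying in no member of $\mathcal{U}$, pick $x_n\in Y_n$, pass to a convergent subsequence $x_{n_j}\to x$, locate an open $U\in\mathcal{U}$ with $B(x,r)\subseteq U$, and derive a contradiction by showing $Y_{n_j}\subseteq B(x,r)\subseteq U$ once $d(x_{n_j},x)$ and $\frac{1}{n_j}$ are both below $\frac{r}{2}$. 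This route trades the continuous-function argument for a subsequence extraction but is equally elementary.
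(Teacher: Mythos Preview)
Your proof is correct and is the standard textbook argument for the Lebesgue number lemma. The paper itself does not supply a proof of this theorem; it is stated as a classical result attributed to Lebesgue and invoked as a tool in the subsequent Lemma~\ref{intervals}, so there is no in-paper argument to compare against.
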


\begin{lemma}\label{intervals} Let $\lbrack a,\, b\rbrack$ be an open interval of positive length $L$. Let $\mathcal{U}$ be a cover of $\lbrack a,\, b\rbrack$ by sets open in $\lbrack 0,\, 1\rbrack$. Then there is a finite discrete open refinement $\mathcal{V}$ of $\mathcal{U}$ such that $\bigcup\mathcal{V}\subset \lbrack a,\, b\rbrack$ and $\lbrack a,\, b\rbrack\setminus \bigcup\mathcal{V}$ is a union of finitely many disjoint closed intervals whose lengths add up to at most $\frac{L}{2}$.
\end{lemma}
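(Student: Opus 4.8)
The plan is to run Lebesgue's covering lemma (Theorem \ref{lebesgue}) against $\mathcal{U}$ and then cut $[a,b]$ into finitely many short closed subintervals, each of which is thereby forced to lie inside a single member of $\mathcal{U}$. Shrinking these subintervals by a common small amount $\eta$ turns the collection into a \emph{discrete} family --- recall that for a finite family of sets discreteness is exactly having pairwise disjoint closures, as noted right after the definition of discreteness --- and the price paid is a union of short gaps left uncovered, which we arrange to have total length at most $\frac{L}{2}$. Here $[a,b]$ is read as the compact interval with endpoints $a<b$, so that Lebesgue's lemma applies.

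Concretely, I would first restrict to the compact metric space $[a,b]$: the family $\{U\cap[a,b]:U\in\mathcal{U}\}$ is an open cover of $[a,b]$, so Theorem \ref{lebesgue} yields a Lebesgue number $\delta>0$, meaning every subset of $[a,b]$ of diameter $<\delta$ is contained in some member of $\mathcal{U}$. Fix an integer $N$ with $\frac{L}{N}<\delta$, put $x_i = a+i\frac{L}{N}$ for $0\le i\le N$, so $a=x_0<x_1<\dots<x_N=b$, and for each $i$ choose $U_i\in\mathcal{U}$ with $[x_{i-1},x_i]\subseteq U_i$ (possible since $[x_{i-1},x_i]$ has diameter $\frac{L}{N}<\delta$).

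Next, fix $\eta>0$ with $\eta\le\frac{L}{4N}$ (so in particular $2\eta<\frac{L}{N}$) and set $V_i=(x_{i-1}+\eta,\;x_i-\eta)$ for $1\le i\le N$. Each $V_i$ is a nonempty open subset of $[0,1]$ with $V_i\subseteq[x_{i-1},x_i]\subseteq U_i$, so $\mathcal{V}=\{V_1,\dots,V_N\}$ is a finite open refinement of $\mathcal{U}$ with $\bigcup\mathcal{V}\subset[a,b]$; and since $\overline{V_i}=[x_{i-1}+\eta,\,x_i-\eta]$, with consecutive closures separated by the interval $[x_i-\eta,\,x_i+\eta]$, the closures are pairwise disjoint, hence $\mathcal{V}$ is discrete. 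Finally one checks directly that $[a,b]\setminus\bigcup\mathcal{V}=[a,a+\eta]\cup[b-\eta,b]\cup\bigcup_{i=1}^{N-1}[x_i-\eta,\,x_i+\eta]$, a union of $N+1$ pairwise disjoint closed intervals whose lengths total $2\eta+(N-1)\cdot 2\eta=2N\eta\le\frac{L}{2}$, which is exactly what the lemma asks for.

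I do not expect a genuine obstacle. The only delicate points are the two-sided choice of $\eta$ --- small enough that the total leftover length $2N\eta$ stays $\le\frac{L}{2}$, yet (equivalently, $2\eta<\frac{L}{N}$) small enough that every $V_i$ is nonempty and the leftover intervals are genuinely disjoint --- and the mild bookkeeping of the $N-1$ interior ``gap'' intervals together with the two boundary ones. One should also note that shrinking away from $a$ and $b$ is not optional: $[a,b]$ may abut an endpoint of $[0,1]$, and a half-open interval like $[a,\,x_1-\eta)$ need not be open in $[0,1]$, whereas the genuine open intervals $V_i$ are open in $[0,1]$ regardless.
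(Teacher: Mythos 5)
Your proof is correct and follows essentially the same route as the paper's: apply the Lebesgue covering lemma to partition $[a,b]$ into equal subintervals shorter than the Lebesgue number, then produce the discrete refinement by leaving small closed gaps between the pieces. The only cosmetic difference is that the paper keeps every other subinterval (so the leftover is exactly $\frac{L}{2}$), whereas you keep all of them but shrink each by $\eta$, which in fact leaves an arbitrarily small leftover; both satisfy the stated bound.
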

\begin{proof}
Using the Lebesgue covering lemma and the compactness of $\lbrack a,\, b\rbrack$, choose a positive real number $\delta$ as in Theorem \ref{lebesgue}. Then choose $\epsilon<\delta$ so that $M:=\frac{L}{\epsilon}$ is an even integer. Choosing $a_0 = a$ and $a_{i+1} = a_i + \epsilon$ for $i<M$ we find that each of the intervals $\lbrack a_i,\, a_{i+1}\rbrack$, $0\le i<M$ is a subset 
of an element of $\mathcal{U}$. Put $\mathcal{V} = \{\left( a_i,\, a_{i+1}\right):\, i<M \mbox{ odd}\}$. Then $\mathcal{V}$ is as required.
\end{proof}

\begin{theorem}\label{twounitinterval}
 TWO has a winning strategy in ${\sf G}_d^{\omega+1}(\open,\open)$ on the closed unit interval.
\end{theorem}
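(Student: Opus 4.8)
The plan is for TWO to spend the first $\omega$ innings shrinking the still-uncovered portion of $[0,1]$ down to a compact, nowhere dense (hence zero-dimensional) set, and then to cover that residual set in the single remaining inning $\omega$ by invoking Lemma \ref{twowinscompactzerodim}. Lemma \ref{intervals} is exactly the tool for the first phase, and it is clearly why that lemma was set up just beforehand.

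\emph{The first $\omega$ innings.} TWO will maintain the invariant that after inning $n$ the set $R_n\subseteq[0,1]$ of points not yet covered is a union of finitely many pairwise disjoint closed intervals whose lengths sum to at most $2^{-(n+1)}$. In inning $0$, given ONE's open cover $\mathcal{U}_0$ of $[0,1]$, TWO applies Lemma \ref{intervals} to $[0,1]$ and $\mathcal{U}_0$ to get a finite discrete open refinement $\mathcal{V}_0$, and sets $R_0 = [0,1]\setminus\bigcup\mathcal{V}_0$. In inning $n+1$, for ONE's open cover $\mathcal{U}_{n+1}$ of $[0,1]$, TWO applies Lemma \ref{intervals} separately to each nondegenerate component interval of $R_n$ (each such interval is covered by $\mathcal{U}_{n+1}$ via sets open in $[0,1]$), leaves any one-point components untouched, and lets $\mathcal{V}_{n+1}$ be the union of the finitely many finite discrete open refinements so produced. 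That $\mathcal{V}_{n+1}$ is a legal move is the first point needing care: it is open and refines $\mathcal{U}_{n+1}$ because each summand is, and it is discrete because the finitely many closed component intervals of $R_n$ are pairwise at positive distance while each summand is discrete and contained in the interior of its component, so a small enough neighborhood of any point of $[0,1]$ meets sets from at most one summand and at most one set from it. Setting $R_{n+1} = R_n\setminus\bigcup\mathcal{V}_{n+1}$, Lemma \ref{intervals} gives that the total length of $R_{n+1}$ is at most half that of $R_n$, and $R_{n+1}$ is again a finite union of pairwise disjoint closed intervals, so the invariant persists; moreover $\bigcup\mathcal{V}_{n+1}\subseteq R_n$, so $R_n = [0,1]\setminus\bigcup_{k\le n}\bigcup\mathcal{V}_k$.

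\emph{The last inning.} Put $C = \bigcap_{n<\omega}R_n = [0,1]\setminus\bigcup_{n<\omega}\bigcup\mathcal{V}_n$. It is closed, being a decreasing intersection of closed sets, and it has Lebesgue measure $0$, hence empty interior, hence is nowhere dense in $[0,1]$; therefore $C$ is compact and zero-dimensional, since around any point of $C$ the density of the complement supplies arbitrarily small clopen-in-$C$ neighborhoods. A compact zero-dimensional space is ultraparacompact (disjointify a finite clopen refinement of any cover), so $C$ is a closed ultraparacompact subspace of the metric space $[0,1]$. In the final inning $\omega$ of ${\sf G}_d^{\omega+1}(\open,\open)$, ONE plays an open cover $\mathcal{U}_\omega$ of $[0,1]$; by Lemma \ref{twowinscompactzerodim}, applied with $X=[0,1]$ and $Y=C$, TWO may respond with a discrete family $\mathcal{V}_\omega$ of sets open in $[0,1]$ that refines $\mathcal{U}_\omega$ and covers $C$. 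Then $\bigcup\{\mathcal{V}_\gamma:\gamma<\omega+1\}$ covers $([0,1]\setminus C)\cup C = [0,1]$ and so belongs to $\open$, and TWO has won every play consistent with this strategy.

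The two places where the argument is most likely to slip, and which I would write out in full, are the discreteness of the amalgamated move $\mathcal{V}_{n+1}$ (controlled by the positive gaps between the finitely many closed component intervals of $R_n$, together with the possibility of degenerate components) and the identification of the residual set $C$ as a zero-dimensional, hence ultraparacompact, set so that Lemma \ref{twowinscompactzerodim} supplies TWO's last move. Both are routine, but worth making explicit.
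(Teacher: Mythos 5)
Your proposal is correct and follows essentially the same route as the paper's proof: use Lemma \ref{intervals} inning by inning on the component intervals of the residual set to halve its total length, observe that the intersection after $\omega$ innings is compact and zero-dimensional, and finish with Lemma \ref{twowinscompactzerodim} in inning $\omega$. Your explicit attention to the discreteness of the amalgamated move and to degenerate components fills in details the paper leaves implicit, but does not change the argument.
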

\begin{proof}
Player TWO's strategy in ${\sf G}^{\omega+1}_d(\open,\open)$ is as follows:
In the first inning player TWO applies Lemma \ref{intervals} to the open cover $O_1$ of $\lbrack 0,\, 1\rbrack$ played by ONE to obtain the open refinement $\mathcal{V}_1$ for which $\lbrack 0,\, 1\rbrack\setminus \bigcup\mathcal{V}_1$ is a union of finitely many closed disjoint intervals, $I^1_1, \cdots, I^1_{n_1}$ with lengths adding up to at most $\frac{1}{2}$.

When ONE plays the open cover $O_2$ next, TWO applies Lemma \ref{intervals} to each $I^1_j$ to find a discrete open refinement $\mathcal{V}_{2,j}$ of $O_2$ with all elements subsets of $I^1_j$, and with $I^1_j\setminus \bigcup\mathcal{V}_{2,j}$ a union of finitely many disjoint closed subintervals of $I^1_j$ of positive length with lengths adding up to at most $\frac{length(I^1_j)}{2}$, and then TWO responds with $\mathcal{V}_2 = \cup_{j\le n_1}\mathcal{V}_{2,j}$. It follows that $\lbrack 0,\, 1\rbrack \setminus (\bigcup\mathcal{V}_1 \cup \bigcup{\mathcal{V}_2})$ is a union of finitely many closed, disjoint, intervals of positive length $I^2_1, \cdots, I^2_{n_2}$ with length adding up to at most $\frac{1}{4}$.

By applying this strategy to the next open covers chosen by ONE, we find that after countably many moves the set $\lbrack 0,\, 1\rbrack \setminus \bigcup\cup_{j=1}^{\infty}\mathcal{V}_j$ is compact and zero dimensional. Then by Lemma \ref{twowinscompactzerodim} TWO wins in one more inning.
\end{proof}

\section{Remarks and Questions}

Also for relative versions of the selective strong screenability game one could define the corresponding length ordinals:
For a subspace $Y$ of a topological space $(X,\tau)$, define
\[
  \textsf{tp}_d(X,Y,\tau) = \min\{\alpha \in \textsf{ON}: \mbox{ TWO has a winning strategy in the game }\textsf{G}^{\alpha}_d(\mathcal{O},\mathcal{O}_Y)\}.
\]
Thus, $\textsf{tp}_d(X,\tau) = \textsf{tp}_d(X,X,\tau)$.

\begin{problem}
Is there a topological space $X$ and a subspace $Y$ for which $\textsf{tp}_d(X,Y,\tau) = 2$?
\end{problem}

\begin{problem}
Is there a topological space $X$ for which $\textsf{tp}_d(X,\tau) = 2$?
\end{problem}

There are complete metric spaces that are zero-dimensional but not ultraparacompact. See for example \cite{Kulesza} and \cite{Roy}. In these spaces TWO does not have a winning strategy in the game $\textsf{G}^1_d(\mathcal{O},\mathcal{O})$. It is not clear whether more can be proven:
\begin{problem}\label{ultrapcptmetric}
If $X$ is a complete metric space that is not ultraparacompact, does ONE have a winning strategy in the game $\textsf{G}^{\omega}_d(\mathcal{O},\mathcal{O})$ on $X$?
\end{problem} 

In connection with  Theorem \ref{ONEmain}, it would be interesting to know:
\begin{problem} Let $Y$ be a set of real numbers. Are the following statements equivalent?
\begin{enumerate}
\item{ONE has  a winning strategy in the game $\textsf{MB}(Y,{\mathbb R})$.}
\item{ONE has a winning strategy in the game $\textsf{G}^{\omega}_d(\mathcal{O},\mathcal{O}_Y)$}
\end{enumerate}
\end{problem}

Our results on the closed unit interval and some heuristic arguments suggest:
\begin{conjecture}\label{unitcubes}
For each positive integer $n$ ONE has a winning strategy in ${\sf G}_d^{\omega\cdot n}(\open,\open)$, and TWO has a winning strategy in ${\sf G}_d^{\omega\cdot n+1}(\open,\open)$ on $\lbrack 0,\, 1\rbrack^n$.
\end{conjecture}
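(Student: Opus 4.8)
We sketch a possible route, proving both halves by induction on $n$. The base case $n=1$ is exactly Corollary \ref{Peanolike} (ONE wins ${\sf G}^{\omega}_d(\mathcal{O},\mathcal{O})$ on $[0,1]$) together with Theorem \ref{twounitinterval} (TWO wins ${\sf G}^{\omega+1}_d(\mathcal{O},\mathcal{O})$ on $[0,1]$). For the inductive step one writes $\omega\cdot n=\omega+\omega\cdot(n-1)$ and $\omega\cdot n+1=\omega+(\omega\cdot(n-1)+1)$, so that in each of the two games the first $\omega$ innings are to be used as a single ``reduction block'' after which the remaining play should be, essentially, the length-$\omega\cdot(n-1)$ (resp. length-$\omega\cdot(n-1)+1$) game, one dimension lower.

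\textbf{Player TWO.} The engine is a higher-dimensional version of Lemma \ref{intervals} and of the shrinking-residual construction inside the proof of Theorem \ref{twounitinterval}, finished off by Lemma \ref{twowinscompactzerodim}. The statement to aim for is a \emph{reduction lemma}: if $X$ is a metric space and $K\subseteq X$ is compact with covering dimension $\le m$, $m\ge 1$, then, responding over $\omega$ innings to any open covers of $X$ played by ONE, TWO can play open refinements $\mathcal{V}_1,\mathcal{V}_2,\ldots$, each discrete in $X$, so that the residual $K\setminus\bigcup_{j<\omega}\bigcup\mathcal{V}_j$ is compact of covering dimension $\le m-1$. Granting this, a routine induction gives TWO a winning strategy in ${\sf G}^{\omega\cdot m+1}_d(\mathcal{O},\mathcal{O}_K)$ whenever $K$ is a compact subset of a metric space $X$ with $\dim K\le m$ — play the first $\omega$ innings by the reduction lemma to pass to a compact $K'$ with $\dim K'\le m-1$, then follow the inductive strategy for $K'$ through the remaining $\omega\cdot(m-1)+1$ innings, the base case $m=0$ being Lemma \ref{twowinscompactzerodim}, since a compact zero-dimensional space is strongly zero-dimensional, hence ultraparacompact. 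Specializing $X=K=[0,1]^n$, $m=n$, would yield TWO's half of the conjecture. For cubes the reduction lemma would be proved as in Lemma \ref{intervals}: given ONE's cover, use the Lebesgue number and compactness to refine to a fine cubical subdivision whose closed cells each lie in a cover member; peel off the slightly shrunken interiors of a maximal pairwise non-adjacent family of cells — a finite, hence discrete, open refinement — leaving as residual a thin closed neighbourhood of the cubical $(m-1)$-skeleton; then iterate inside the residual.

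The difficulty, and the reason this remains a conjecture, lies in controlling the limit of that iteration. Shrinking cells naively only forces the residual to have empty interior; in $[0,1]^n$ a closed set with empty interior has dimension $\le n-1$, but inside an arbitrary compact metric space (and the residual after the \emph{second} block need no longer be a cube) this implication fails, so the meshes and the neighbourhood widths across successive innings must be balanced delicately so that the residual is genuinely a countable union of small closed pieces each sitting in an $(m-1)$-dimensional polyhedron, with its closure adding nothing of locally full dimension. Making this quantitative, while simultaneously keeping every peeled-off family discrete in the ambient space $X$ (which is where Lemma \ref{extend}, as in the proof of Lemma \ref{twowinscompactzerodim}, would enter), is the heart of the matter.

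\textbf{Player ONE.} Here too one would play in $\omega$-blocks: in the first block ONE should force TWO to leave uncovered a set containing a topological copy of $[0,1]^{n-1}$ — naturally a fibre $\{t_0\}\times[0,1]^{n-1}$ of the projection $[0,1]^n\to[0,1]$ — and then switch to the inductive winning strategy for ${\sf G}^{\omega\cdot(n-1)}_d(\mathcal{O},\mathcal{O})$ on that copy; a surviving point of the final residual then shows ONE wins ${\sf G}^{\omega\cdot n}_d(\mathcal{O},\mathcal{O})$. The block strategy would fuse the Banach--Mazur/connectedness argument of Theorem \ref{ONEmain} (which by itself preserves only a single point, via Lemmas \ref{coverconstruct} and \ref{discreteconnected}) with bookkeeping in the first coordinate. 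The main obstacle is that TWO's discrete refinements need not respect the product structure — pulling covers back along the projection does not help, since projection destroys discreteness — so it is unclear that ONE can confine the covered fibres to a proper subset of $[0,1]$. A successful argument would presumably require ONE to maintain a new invariant through the $\omega$-block, such as a nested shrinking family of tubes $\pi^{-1}(J)$ over intervals $J$, each containing an uncovered connected set of dimension $n-1$, and the crux is to verify that such an invariant can in fact be maintained against arbitrary play by TWO.
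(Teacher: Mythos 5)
The statement you are addressing is labeled a \emph{conjecture} in the paper; the authors offer no proof, only the remark that their results on $[0,1]$ and ``some heuristic arguments'' suggest it. Your proposal is, by your own admission, not a proof either: both halves rest on unestablished lemmas, and you candidly flag each one as the open point. That honesty is to your credit, but it means the submission should be read as a research program consistent with the authors' heuristics, not as a verification of the statement.

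Concretely, the two gaps are these. For TWO, everything reduces to your ``reduction lemma'': that over $\omega$ innings TWO can refine arbitrary covers by families discrete in $X$ so that the residual drops from covering dimension $m$ to covering dimension $m-1$. The cube-subdivision picture makes the single-inning step plausible (and for $n=1$ it is exactly Lemma \ref{intervals}, where the length estimate $\frac{L}{2}$ does all the work at the limit), but for $n\ge 2$ you give no quantitative invariant playing the role of ``total length halves each inning'' that would force the $\omega$-limit residual to be $(m-1)$-dimensional rather than merely nowhere dense; and after the first block the residual is no longer a polyhedron, so the subdivision argument cannot simply be iterated. For ONE, the assertion that the first $\omega$-block can be played so that the uncovered residual contains a topological copy of $[0,1]^{n-1}$ is not backed by any mechanism: the argument of Theorem \ref{ONEmain} preserves exactly one point of a nested sequence of Banach--Mazur moves, and you would need a strengthened Banach--Mazur-type game whose ONE-player preserves an $(n-1)$-cube (or at least an $(n-1)$-dimensional continuum) in the intersection, against TWO's discrete refinements; you correctly observe that discreteness is destroyed by projection, but you do not supply the invariant that would replace it. Until one of these two lemmas is actually proved, the induction has no engine, and the statement remains a conjecture. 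Note also one small slip: even granting the reduction lemma, the $m=0$ base for TWO needs the residual to be a \emph{closed} zero-dimensional subset of the ambient metric space so that Lemma \ref{twowinscompactzerodim} applies; for compact residuals this is automatic, but it is worth saying, since your reduction lemma is stated for compact subsets of an arbitrary metric space $X$.
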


\section{Acknowledgements}

We thank Roman Pol and Rodrigo Dias for very informative communications that drastically improved the contents of this paper.

\end{document}